%% file: Deformation_main.tex
\documentclass[12pt]{amsart}

\input{packages}
\input{macros}
\input{drawings}

\begin{document}

\title{Skein theory and deformations}

\author[Snyder]{Noah~Snyder}
\email{nsnyder1@iu.edu}

\author[Spencer]{Benjamin~Spencer}

\begin{abstract}
    We use skein theoretic `recognition' theorems to classify deformations of certain pivotal or ribbon monoidal categories. We first show that a slight generalization of Kuperberg's characterization of quantum $G_2$ shows that any infinitesimal deformation of quantum $G_2$ as a pivotal category arises by varying $q$. This result applies both at generic $q$, and for the category of tilting modules at $q$ a root of unity (provided we exclude a few small roots of unity).
    Second, we prove a new Kuperberg-like characterization of Deligne's $S_t$ and use it to show that any infinitesimal deformation of $S_t$ (excluding $t=0$) as a ribbon category comes from varying $t$.
\end{abstract}

\maketitle

\section{Introduction} \label{sec:introduction}

The goal of this paper is to study infinitesimal deformations of categories using skein theoretic descriptions of those categories. More specifically, we classify all infinitesimal deformations of two categories: the $G_2$ quantum group category (for $q$ not a primitive $3$rd, $4$th, $6$th, $8$th, $16$th, or $24$th root of unity)\footnote{In fact we expect this argument to work for all $q\neq \pm i$.}, and the category of permutation representations of Deligne's $S_t$ (for $t\neq 0$). These categories have skein theoretic descriptions via planar trivalent graphs (respectively, trivalent ribbon graphs), and our approach is to use classification results of such skein theories.

Giving precise statements of our main results requires some background, so in the introduction we will only give informal statements, with the precise statements in the main paper.

Our starting point is the following theorem of Kuperberg.

\begin{theorem}\cite{MR1265145} \label{thm:intro-G2}
    Suppose that $(\mathcal{C},X)$ is a trivalent category, such that $\dim \Hom(X^{\otimes k}, \mathbf{1}) = 1,0,1,1,4,10$ for $k\leq 5$, such that $\Hom(X^{\otimes k}, \mathbf{1})$ for $0 \leq k \leq 5$ have bases given by the trivalent graphs with no internal faces. Then there is a number $q$ and a functor $(G_2)_q \rightarrow \mathcal{C}$ sending the defining $7$-dimensional representation to $X$.
\end{theorem}

Intuitively, all the conditions in this theorem are `open' and so this should imply the following corollary.

\begin{corollary}\label{cor:intro-G2}
    Any infinitesimal deformation of $(G_2)_q$ comes from varying $q$.\footnote{More precisely, varying $x=q+q^{-1}$.}
\end{corollary}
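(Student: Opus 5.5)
To prove Corollary~\ref{cor:intro-G2}, the plan is to run Theorem~\ref{thm:intro-G2} over the ring of dual numbers $R=k[\epsilon]/(\epsilon^2)$ instead of over a field. Concretely, an infinitesimal deformation $\widetilde{\mathcal{C}}$ of $(G_2)_q$ is an $R$-linear pivotal category with free Hom-modules and an identification $\widetilde{\mathcal{C}}\otimes_R k\cong (G_2)_q$. The goal is to show $\widetilde{\mathcal{C}}\cong (G_2)_{\tilde q}\otimes R$ for some $\tilde x = x + \epsilon x'$, where $x=q+q^{-1}$ and $(G_2)_{\tilde q}$ means the Kuperberg skein category with coefficients specialized at $\tilde x$.

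\textbf{Step 1: lift the generators.} Lift the object $X$ and the trivalent vertex, as well as the cup and cap, to $\widetilde{\mathcal{C}}$. The axioms of a trivalent category should survive the lift:
\begin{itemize}
\item Rotational invariance of the vertex: the rotation operator has order $3$ on $\Hom(X^{\otimes 3},\mathbf 1)$, so averaging gives an invariant lift when $3$ is invertible.
\item Symmetric self-duality: this lifts by a similar argument.
\item The dimension conditions $\dim\Hom(X^{\otimes k},\mathbf{1}) = 1,0,1,1,4,10$: these hold as ranks of free $R$-modules, since Homs are flat deformations.
\item The basis condition: a set of elements whose reductions mod $\epsilon$ form a basis is itself a basis of a free $R$-module, by Nakayama.
\end{itemize}
So the planar trivalent graphs with no internal faces again form bases of $\Hom(X^{\otimes k},\mathbf 1)$ for $k\le 5$.

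\textbf{Step 2: Kuperberg's argument over $R$.} Expressing the loop, the bigon, the triangle, the square and the pentagon in the no-internal-face bases gives skein relations whose coefficients lie in $R$ and reduce to Kuperberg's coefficients mod $\epsilon$. Kuperberg's proof of Theorem~\ref{thm:intro-G2} derives polynomial consistency equations among these coefficients by evaluating small closed diagrams in two ways. Its conclusion is that the solutions form a one-parameter family parametrized by $x$, after normalizing the vertex by a scalar.

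This is the step I expect to be the main obstacle. One must check that the solution variety is smooth of dimension one at the point $x$, i.e.\ that its tangent space is exactly the $x$-direction plus rescaling. Otherwise there could be $R$-points not coming from varying $x$. I would first try to solve Kuperberg's equations explicitly, expressing the coefficients as rational functions of $d$, and check that the relevant Jacobian is nonzero. This nonvanishing is exactly where finitely many small roots of unity must be excluded, because at those points denominators or derivatives degenerate. Rescaling the vertex by a unit of $R$ then normalizes the remaining parameter.

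\textbf{Step 3: produce and identify the functor.} Once the relations over $R$ match those of $(G_2)_{\tilde q}$, we obtain an $R$-linear pivotal functor from the skein category $(G_2)_{\tilde q}\otimes R$ to $\widetilde{\mathcal{C}}$. This functor reduces mod $\epsilon$ to the original equivalence. An $R$-linear functor between flat deformations that is an equivalence mod $\epsilon$ is fully faithful, by Nakayama applied to each Hom-module. It is essentially surjective after idempotent completion, since idempotents lift. Hence $\widetilde{\mathcal{C}}$ arises by varying $x$.

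In the root-of-unity case, where the category is the tilting modules, I would replace the skein category by its idempotent completion and require that $X$ tensor-generates. The lifting steps are unchanged.
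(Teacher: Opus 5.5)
\textbf{There is a genuine gap in Step~2.} Your Steps~1 and~3 match the paper: you lift the bases by Lemma~\ref{lem:projectiveNakayama}, build a functor from the skein category, and conclude it is an equivalence because it is one modulo $h$. But Step~2 carries the entire content of the corollary, and you leave it as an unperformed smoothness/Jacobian computation. You rightly note that Kuperberg's theorem over $\mathbb{C}$ only constrains $\mathbb{C}$-points.

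The idea you are missing is that Kuperberg's elimination is already ring-theoretic. In $A=\mathbb{C}[h]/h^2$ the pentagon coefficient $e_1'$ satisfies $e_1'\equiv 1 \pmod h$, so it is a unit. By Lemma~\ref{lem:rescaling} you may therefore rescale to $e_1'=1$. Kuperberg's consistency equations then determine every remaining coefficient in terms of the square coefficient, with no further division. The one exception is the step where Kuperberg divides by $d_2$; there you instead solve his equation $bc=2e_1b+2e_2+ae_2+e_1c$, using the already-derived $e_2=-1$. The result is Theorem~\ref{thm:G2recognition}, valid over any commutative ring. Over $A$ it directly gives the parameter $x'=(e_1')^{-1}a'$ and a functor $G_2(A,x')\to(\mathcal{C},X,(e_1')^{-1}s',(e_1')^{-1}\tau')$. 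No tangent-space analysis is needed. Note that $x=a/e_1$ is the natural coordinate: under $(s,\tau)\mapsto(us,v\tau)$, both $a$ and $e_1$ scale by $v^2/u$, so their ratio is invariant.

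\textbf{Your account of where the small roots of unity enter is also wrong.} This matters because it misidentifies what you would have to prove. The deformation statement for the skein category $G_2(\mathbb{C},x)$ holds for \emph{every} $x\in\mathbb{C}$. The excluded orders enter only through two facts:
\begin{itemize}
\item the identification of the additive idempotent completion of $G_2(\mathbb{C},q+q^{-1})$ with tilting modules;
\item the correspondence between deformations of a category and deformations of its completion.
\end{itemize}
Any degeneracy a Jacobian or denominator computation finds would be an artifact of a bad elimination or a bad coordinate.

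In particular, if by $d$ you mean the loop value, your proposed parametrization cannot work. Since $d(x)=x^5+x^4-5x^3-4x^2+6x+3$ has degree $5$ in $x$, the coefficients (for instance $a=x$ itself) are not rational functions of $d$. Moreover, $d$ fails to be a local coordinate at the roots of $d'(x)=(x+1)(5x^3-x^2-14x+6)$. The cubic factor is irreducible over $\mathbb{Q}$, and its roots are not algebraic integers. Hence these roots are not of the form $q+q^{-1}$ with $q$ a root of unity, so your method would break down at generic $q$, where the statement is true. A tangent-space argument is not wrong in principle, since the normalized solution scheme really is the affine $x$-line. But establishing that requires exactly the elimination above, and once you have it, Theorem~\ref{thm:G2recognition} already gives the result.
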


In order to rigorously state and prove this corollary we will need an improved version of Kuperberg's result which works over an arbitrary commutative base ring $R$ rather than just the complex numbers. However, the proof of this improved statement is identical to that of Kuperberg.

We then prove an analogue of Kuperberg's theorem for the category of permutation representations of Deligne's $S_t$. We introduce a category $\mathcal{S}_t$ which interpolates between the irreducible $(n-1)$-dimensional representations of $S_n$ (i.e. a category version of the quasi-partition algebra \cite{MR3177889} of Daugherty-Orellana), which has a skein theoretic characterization similar to Kuperberg's.

\begin{theorem}
\label{thm:intro-St}
    Suppose that $(\mathcal{C},X)$ is a trivalent ribbon category, such that  $\dim \Hom(X^{\otimes k}, \mathbf{1}) = 1,0,1,1,4$ for $k\leq 4$, and such that these Hom spaces have bases given by the natural basis of partitions with no singletons. Moreover suppose that the $-1$-eigenspace for rotation on $\Hom(X^{\otimes 4},\mathbf{1})$ is one-dimensional. Then there is a number $t$ and a functor $\mathcal{S}_t \rightarrow \mathcal{C}$ sending the non-trivial irreducible summand of the permutation representation to $X$.
\end{theorem}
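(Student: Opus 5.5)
The proof will follow the pattern of Kuperberg's argument and of the Morrison--Peters--Snyder classification of trivalent categories. The idea is to show that the dimension constraints force $\mathcal{C}$ to satisfy a list of local skein relations, and then to identify the skein category defined by those relations with $\mathcal{S}_t$.

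First we fix normalizations. Let $X$ be the object and $\tau \in \Hom(X^{\otimes 3},\mathbf{1})$ the trivalent vertex. Since $\Hom(X^{\otimes 2},\mathbf{1})$ is one-dimensional and $\Hom(X,\mathbf{1})=0$, we have:
\begin{itemize}
\item a loop evaluates to a scalar $d$;
\item a lollipop vanishes;
\item a bigon is a multiple $b$ of the strand, and we normalize $\tau$ so that $b$ is fixed (or $b=0$ in a degenerate case);
\item the triangle is a multiple of the vertex, with some constant $c$.
\end{itemize}
In the ribbon setting we must also handle the twist and the braiding. Rotation acts on the one-dimensional space $\Hom(X^{\otimes 3},\mathbf{1})$ by a cube root of unity, and the twist on $X$ is a scalar. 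Because $\mathcal{S}_t$ is symmetric, we want to show that the crossing is expressible in the planar basis of $\Hom(X^{\otimes 4},\mathbf{1})$. This space is $4$-dimensional, spanned by the partitions with no singletons: the three pairings $\{12|34\}$, $\{14|23\}$, $\{13|24\}$, and the single block $\{1234\}$. In the trivalent language the basis elements are the two planar cup diagrams, the crossing, and a tree diagram.

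Next we use the rotation eigenspace hypothesis to derive the key relations. Rotation by one click acts on this $4$-dimensional space. With respect to the partition basis, it fixes the single block, fixes the crossing pairing, and swaps the two planar pairings. Its $-1$-eigenspace is therefore spanned by the difference of the two planar pairings. The $I$-diagram and the $H$-diagram differ by rotation, so their difference lies in the $-1$-eigenspace. That eigenspace is one-dimensional by hypothesis, and it already contains the difference of the two cup diagrams. Hence
\[
I - H = \lambda(\text{cups}_1 - \text{cups}_2),
\]
which is an $I=H$ type relation. Similarly, expanding the square face in the basis and using rotational symmetry gives an explicit square relation, and a relation for the crossing in terms of the other basis elements. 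Then we play consistency games: cap off these relations, and compose them with vertices to evaluate pentagons and the twice-applied braiding, using $\dim\Hom(X^{\otimes 4},\mathbf{1})=4$. This should force all structure constants ($b$, $c$, $\lambda$, the square and crossing coefficients, and the twist) to be rational functions of a single parameter $d=t-1$, up to the normalization of $\tau$. These are exactly the constants that hold in $\mathcal{S}_t$, where the vertex is the projected merge map.

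The final step, which I expect to be the main obstacle, is to show that these relations present $\mathcal{S}_t$. That is, the free ribbon trivalent category modulo these local relations maps onto $\mathcal{S}_t$, and so the assignment descends to a functor $\mathcal{S}_t\to\mathcal{C}$. My plan is to prove that the relations already hold in $\mathcal{S}_t$ by a direct computation with partitions. Then I would show they give an evaluation algorithm: the crossing and $I=H$ relations reduce every diagram to planar-partition-like normal forms. The functor is then defined on generators, and it is well-defined because any relation in $\mathcal{S}_t$ between diagrams is a consequence of the local relations. This can be checked by comparing normal forms, using the known bases of the quasi-partition algebra of Daugherty--Orellana. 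The subtle point is that the generating relations must be enough to present $\mathcal{S}_t$, so the hard part will be this presentation statement. The degenerate cases, such as $d$ making the idempotents fail, also need separate treatment.
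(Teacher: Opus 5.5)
Your use of the eigenspace hypothesis to get $I-H=r(\smallfig{\twostrandid}-\smallfig{\cupcap})$ is exactly the paper's key step (Equation \ref{eq:IequalsH}). After that, however, there is a genuine gap: everything is left as ``consistency games \ldots\ should force'', and several of the moves you do name cannot work.

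\emph{The moves that fail.} In $\mathcal{C}$, rotation does not fix the crossing. It exchanges $\smallfig{\braidcross}$ and $\smallfig{\invbraidcross}$, and their equality is precisely the symmetry relation of $\mathcal{S}_t$ that you have to prove. The crossing also cannot be ``expressed in the planar basis'', since by hypothesis it is one of the four basis vectors of $\mathcal{C}^4$. Pentagon checks are unavailable, because nothing is assumed about $\mathcal{C}^5$; every comparison of coefficients must take place in $\mathcal{C}^3$ or $\mathcal{C}^4$.

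\emph{The missing ideas.} (i) $\smallfig{\braidcross}-\smallfig{\invbraidcross}$ is \emph{also} a $(-1)$-eigenvector, so the same rank-one hypothesis gives Equation \ref{eq:crossing}. (ii) Normalize $r=1$ rather than the bigon. Capping Equation \ref{eq:IequalsH} shows the bigon equals $r(t-2)$, so it vanishes at $t=2$ and cannot serve as the normalization. (iii) Write the twist and the twisted vertex as $\alpha$ and $\beta$ times their untwisted versions. Slide a vertex through a crossing, expand both sides with Equation \ref{eq:IequalsH}, and compare the coefficients of $\smallfig{\braidcross}$ and $\smallfig{\cupcap}$; this gives $\beta=1$ and then $\alpha=1$. (iv) Composing Equation \ref{eq:crossing} with a vertex then gives $\rho=0$.

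\emph{The invertibility of $r$.} The invertibility of $r=-y_1$, assumed in the precise version Theorem \ref{thm:StMain}, is a necessary hypothesis, not a degenerate case to be handled separately. Consider the category presented over $\mathbb{C}[\epsilon]$ by the relations of $\mathcal{S}(\mathbb{C}(\epsilon),\epsilon^{-1},t\epsilon)$, which are polynomial in $\epsilon$. It is free on the $D_\pi$: they span, and they are independent after inverting $\epsilon$. So its fiber at $\epsilon=0$ satisfies all the other hypotheses but has $I=H$ while $\smallfig{\twostrandid}\neq\smallfig{\cupcap}$. Hence it receives no functor from any $\mathcal{S}_t$.

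Conversely, what you call the main obstacle is not part of this theorem. In the paper $\mathcal{S}_t$ is \emph{defined} by its seven relations. Once they hold in $\mathcal{C}$ (after rescaling $s$ and $\tau$ by $r^{-1}$), the functor exists by the universal property of a presented category; no normal forms in $\mathcal{C}$, and no idempotents, are involved. Your normal-form argument is what identifies this presented category with the tensor powers of the non-trivial summand in Deligne's category, which is what makes ``sending the non-trivial summand to $X$'' meaningful. It is indeed how the paper proves Proposition \ref{prop:StVersions}: left-associated forests $D_\pi$ span, and their images are unitriangular against the $\mathrm{Res}(B_\pi)$ (Lemma \ref{lem:reducedpartitionbasis}). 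But that statement does not involve $\mathcal{C}$ at all. So your proposal puts its effort into the separately proved identification, and leaves the actual content of the recognition theorem unproved: the short computation in $\mathcal{C}^3$ and $\mathcal{C}^4$ described above.
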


Again, all these conditions are open, so once we carefully state this theorem over a general base ring we can classify deformations.

\begin{corollary} \label{cor:intro-St}
    Any ribbon infinitesimal deformation of the category of permutation modules for Deligne's $\mathcal{S}_t$ (for $t \neq 0$) comes from varying $t$.
\end{corollary}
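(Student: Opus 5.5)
The plan is to deduce Corollary~\ref{cor:intro-St} from an $R$-linear version of Theorem~\ref{thm:intro-St}, applied with $R=\mathbb{C}[\epsilon]/(\epsilon^2)$. First I will restate Theorem~\ref{thm:intro-St} over an arbitrary commutative base ring $R$. The hypotheses become: $\Hom(X^{\otimes k},\mathbf{1})$ is a free $R$-module of rank $1,0,1,1,4$ for $k\le 4$, with basis the singleton-free partition diagrams; and the $-1$-eigenspace of rotation on $\Hom(X^{\otimes 4},\mathbf{1})$ is a free rank-one summand. The conclusion is that there are $t\in R$ and a functor $(\mathcal{S}_t)_R\to\mathcal{C}$ with $X$ as the image of the generator. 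I will check that Kuperberg-style arguments go through over $R$. The relevant steps are: writing the square and the ``$I=H$''-type relations in the given bases, solving the resulting equations for the structure constants, and reducing all of them to a single parameter $t$ together with rescalings of the trivalent vertex. These steps only use that certain coefficients are units. For $t\neq 0$, and for coefficients such as the loop value $t-1$ or related quantities that are invertible at the special fiber, units lift along the nilpotent thickening.

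Next, let $\widetilde{\mathcal{C}}$ be a ribbon infinitesimal deformation of $\Rep(S_t)$, the permutation-module category. Let $X$ be the lift of the $(n-1)$-dimensional ``standard'' object. Such a lift exists because idempotents lift uniquely up to conjugacy over $\mathbb{C}[\epsilon]/(\epsilon^2)$. Hom spaces in a flat deformation are free of the same rank, and any lift of a basis is a basis by Nakayama. Lifting the cup, cap and trivalent vertex therefore gives a trivalent ribbon category over $R$. Its invariant spaces have the required ranks, and the lifted partition diagrams are bases. The rotation-eigenspace condition also lifts: rotation has order $4$, and $2$ is invertible, so eigenspace decomposition is compatible with base change. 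The $R$-version of the theorem then produces $\tilde t=t+\epsilon s$ and a functor $(\mathcal{S}_{\tilde t})_R\to\widetilde{\mathcal{C}}$ reducing to the identification mod $\epsilon$.

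Finally, I will show that this functor is an equivalence after idempotent completion, so the deformation is the one obtained by varying $t$. It is full and faithful, since it is a map of free $R$-modules of equal rank that is an isomorphism mod $\epsilon$, again by Nakayama. It is essentially surjective on Karoubi envelopes, because every object of $\Rep(S_t)$ is a summand of tensor powers of $X\oplus\mathbf{1}$, and such summands lift. Here I need that $\mathcal{S}_t$ tensor-generates the permutation-module category; that is where the relation between the quasi-partition category and Deligne's $\Rep(S_t)$ enters. I also need to exclude $t=0$, where the trivial summand of the permutation representation cannot be split off.

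The main obstacle is the $R$-linear recognition step. The original argument at a field divides by various quantities, and I must verify that each one is a unit in $\mathbb{C}[\epsilon]/(\epsilon^2)$ away from $t=0$; equivalently, that it is nonzero at the closed point. If some quantity vanishes at a special value of $t$, I would first try to replace that division by using the nondegeneracy of the pairing on $\Hom(X^{\otimes 4},\mathbf{1})$.
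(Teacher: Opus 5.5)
\textbf{There is a genuine gap: the recognition step is missing, and your sketch of it would fail at some allowed $t$.}

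Your reduction matches the paper's route. You lift the box-space bases by Nakayama and get the rank-one $-1$-eigenspace from the idempotent $\frac12(1-\rho)$. You observe that the resulting functor is an isomorphism on box spaces because it is one mod $\epsilon$. You then pass to Karoubi envelopes, with $t\neq 0$ used to identify the completion with Deligne's $\mathrm{Rep}(S_t)$.

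The problem is the $R$-linear recognition step. All the content of the corollary lives there, and you leave it as ``the main obstacle''. There is no earlier field-level argument to transplant, since the $S_t$ recognition theorem is new. You propose dividing by quantities like the loop value $t-1$, and otherwise falling back on nondegeneracy of the pairing on $\Hom(X^{\otimes 4},\mathbf 1)$. Both fail at allowed values of $t$:
\begin{itemize}
\item $t-1=0$ at $t=1$.
\item The $4$-box pairing of $\mathcal{S}_t(\mathbb{C})$ is degenerate at $t=1,2,3$. For example, at $t=3$ the full functor to $\mathrm{Rep}(S_3)$ sends the $4$-dimensional $4$-box space onto $\Hom_{S_3}(V^{\otimes 4},\mathbb{C})$, which is $3$-dimensional. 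Its kernel is therefore a nonzero radical of the pairing.
\end{itemize}
As written, your plan would at best give the corollary away from a finite set of $t$, and only after an argument you have not supplied.

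\textbf{The missing idea is how the eigenspace hypothesis is used.} This is the content of Theorem~\ref{thm:StMain}.
\begin{itemize}
\item With $2\in R^\times$ and the $-1$-eigenspace free of rank one (it is generated by $\smallfig{\twostrandid}-\smallfig{\cupcap}$), both $\smallfig{\drawI}-\smallfig{\drawH}$ and $\smallfig{\braidcross}-\smallfig{\invbraidcross}$ are multiples $r$ and $\rho$ of $\smallfig{\twostrandid}-\smallfig{\cupcap}$.
\item Then $y_1=-r$, and $y_1\equiv -1 \pmod{\epsilon}$ for every $t$. Rescaling $s$ and $\tau$ by $r^{-1}$ gives $r=1$.
\item Pull a vertex through a crossing and expand both sides with the $I$--$H$ relation. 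Comparing coefficients of $\smallfig{\braidcross}$ and then $\smallfig{\cupcap}$ forces the vertex-twist and strand-twist scalars to equal $1$.
\item Capping the $I$--$H$ relation gives the bigon value $t-2$. The lollipop vanishes because $\mathcal{C}^1=0$.
\item Composing the crossing relation with a vertex gives $\rho=0$.
\end{itemize}
Every step is a coefficient comparison in the given free bases, and the only units used are $2$ and $r$. So no condition on $t$ enters the recognition step. The hypothesis $t\neq 0$ is used only in the identification with Deligne's category.
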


Note here that in a ribbon deformation we allow both the monoidal structure and the braiding to vary.

Etingof-Snowden have proved a substantial generalization of Corollary \ref{cor:intro-St} to all Oligomorphic group categories \cite{EtingofSnowdenDeformations}.

We anticipate that similar skein theoretic arguments (inspired by \cite{MR1237835, MR2132671,MR2783128,1507.06030}) can be used to classify all pivotal infinitesimal deformations of quantum group categories in types $A$, $B$, $C$, and $D$, but not in types $E$ and $F$ where we lack skein theoretic descriptions. Although deformations of quantum group categories at generic $q$ have not been classified in the literature, according to Pavel Etingof and ChatGPT, it should be within reach to classify such deformations by computing their Davydov-Yetter cohomology. We do not pursue either of these directions further in the current paper.

In Section \ref{sec:Background} we give some background on categories with a trivalent vertex and deformations. In Section \ref{sec:G2}, we prove our main deformation result for $G_2$. This follows rather easily from a minor modification of \cite{MR1265145}, so in this section we assume that the reader has read that short paper. In Section \ref{sec:StDef}, we define the skein theoretic category $\mathcal{S}_t$ (following \cite[\S6.2.1]{MR5101335}) and show that for $t \neq 0$ it has the same idempotent and additive completion as Deligne's category $\mathrm{Rep}(S_t)$. In Section \ref{sec:StDeformations}, we prove our recognition and deformation theorems for $\mathcal{S}_t$. A reader familiar with trivalent skein theories \cite{MR1265145,MR3624901,MR5101335} should be able to skip directly to Section \ref{sec:StDeformations}.

\subsection*{Acknowledgements}
This work was funded in part by NSF DMS-200009, DMS-2453032, and the Simons Foundation grants MPS-TSM-00007608 and SFI-MPS-SFM-00021230. We thank Pavel Etingof for several helpful conversations.

\subsection*{Authorial and AI statement}
This work is based in part on BS's PhD thesis. This paper was drafted by hand by NS. The main mathematical ideas came from the authors, but ChatGPT 5.6 Sol was used for adapting the paper from analytic deformations to infinitesimal deformations. This adaptation mostly involved routine commutative algebra that NS should have remembered, but ChatGPT did find a key improvement in the argument for Theorem \ref{thm:StMain} allowing us to avoid considering $5$-boxes. AI was also used to draw some TikZ diagrams, for literature search, and to find and fix minor errors. In a few cases (e.g., Lemma \ref{lem:projectiveNakayama}) these suggested edits on the final draft were used verbatim. Many diagrams and a few definitions are reused from \cite{MR5101335}.

\section{Background} \label{sec:Background}

\subsection{Categories and trivalent graphs}

We recall some background on trivalent categories and trivalent ribbon categories from \cite{MR3624901,MR5101335}. We work throughout over a commutative base ring $R$ which we do not assume to be a field. Since $R$ is not assumed to be a field we need to introduce the following notion.

\begin{definition}
    An $R$-linear category is finite free if for every pair of objects $X$, $Y$ we have that $\Hom(X,Y)$ is a finitely generated free $R$-module.
\end{definition}

The reason for this definition is that we will repeatedly use the following version of Nakayama's lemma \cite[Lemma 00DV]{stacks-project}.

\begin{lemma} \label{lem:projectiveNakayama}
    Let $A$ be a commutative ring, let $I\subset A$ be a nilpotent
    ideal, and let $f\colon M\to N$ be a map of finite projective
    $A$-modules. If $f$ is an isomorphism modulo $I$, then $f$ is an
    isomorphism. In particular, if $M/IM$ is free, then any 
    lift of a basis of $M/IM$ is a basis of $M$.
\end{lemma}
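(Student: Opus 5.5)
We first prove that $f$ is surjective and then that it is injective; the statement about bases then follows by applying the main statement to a map $A^n \to M$. Since $I$ is nilpotent, choose $N_0$ with $I^{N_0}=0$.

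\emph{Surjectivity.} Let $C=\operatorname{coker} f$. Because $M/IM\to N/IN$ is surjective and tensoring with $A/I$ is right exact, we get $C/IC=0$, that is, $C=IC$. Iterating this gives $C=I^{N_0}C=0$. Note that this step needs neither finite generation nor projectivity; nilpotence of $I$ suffices.

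\emph{Injectivity.} Since $N$ is projective, the surjection $f$ splits. This gives $M\cong N\oplus K$ with $K=\ker f$. As a direct summand of the finite projective module $M$, the module $K$ is again finitely generated and projective. Tensoring the split exact sequence with $A/I$ keeps it split exact, so
\[
M/IM\cong N/IN\oplus K/IK .
\]
Under this decomposition $f\otimes A/I$ is the projection onto $N/IN$. By hypothesis it is an isomorphism, so $K/IK=0$. The same nilpotence argument as before then gives $K=0$, and hence $f$ is an isomorphism. This step is where projectivity of $N$ is genuinely needed: without the splitting, the kernel of $f\otimes A/I$ need not equal $K/IK$, since $\operatorname{Tor}_1$ terms could interfere. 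I expect this to be the main point to get right.

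\emph{Bases.} Suppose $M/IM$ is free with basis $\bar e_1,\dots,\bar e_n$, and let $e_1,\dots,e_n\in M$ be arbitrary lifts. Define $g\colon A^n\to M$ by sending the standard basis vectors to the $e_i$. The source $A^n$ is finite free, hence finite projective, and $g$ reduces modulo $I$ to the isomorphism $(A/I)^n\to M/IM$ determined by the basis. By the first part, $g$ is an isomorphism, so the $e_i$ form a basis of $M$.
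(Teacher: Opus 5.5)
Your proof is correct and is essentially the paper's argument: Nakayama (in the nilpotent form $C=IC\Rightarrow C=I^{N_0}C=0$) gives surjectivity, projectivity of $N$ splits off $K=\ker f$, reducing the split sequence modulo $I$ gives $K/IK=0$, and Nakayama again gives $K=0$. Your deduction of the basis statement from the map $A^n\to M$ spells out the ``in particular'' step, which the paper leaves to the reader.
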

\begin{proof}
    Nakayama's lemma implies that $f$ is surjective. Since $N$ is
projective, the resulting short exact sequence splits, so its kernel
$K$ is finite projective. Reducing the split sequence modulo $I$ shows
that $K/IK=0$. Another application of Nakayama's lemma therefore gives
$K=0$.
\end{proof}

We will be studying pivotal finite free $R$-linear categories $\mathcal{C}$ with a distinguished object $X$ with a symmetric self-duality $s\colon X \rightarrow X^*$ (i.e., one where $(p_X)^{-1} \circ s^* = s$ where $p_X \colon X \to X^{**}$ is the pivotal structure). We will also assume that $X$ is a generator in the sense that every object in $\mathcal{C}$ is isomorphic to $X^{\otimes n}$ for some $n$. This is a relatively harmless assumption as you can always restrict to the full subcategory on such objects. Moreover, in the cases we care about, you can recover the original category from this full subcategory by taking the idempotent and additive completions.

Given such a triple $(\mathcal{C}, X, s)$ we let $\mathcal{C}^n$ denote the $n$-box space $\Hom(X^{\otimes n},\mathbf{1})$. Using pivotality, the space of $n$-boxes can be identified with $\Hom(X^{\otimes a},X^{\otimes b})$ for $a+b = n$. Such triples $(\mathcal{C}, X, s)$ are also called unoriented unshaded planar algebras \cite{MR2559686,1507.06030,MR3624901}.

\begin{definition}
    A trivalent (resp. trivalent ribbon) category $(\mathcal{C},X,s,\tau)$ consists of an $R$-linear pivotal (resp. ribbon) category $\mathcal{C}$, a distinguished generator $X$, a symmetric self-duality $s$, and a map $\tau\colon X \otimes X \rightarrow X$ called the trivalent vertex, satisfying:
    \begin{enumerate}
        \item $\tau$ is rotationally invariant. That is, the map $$X \otimes X \rightarrow X \otimes X \otimes X \otimes X^* \rightarrow X^* \otimes X \otimes X \rightarrow X$$ given by \[(\mathrm{ev} \otimes \mathrm{id}) \circ (s \otimes \tau \otimes s^{-1}) \circ (\mathrm{id} \otimes \mathrm{id} \otimes \mathrm{coev})\] is equal to $\tau$.
        \item $\mathcal{C}^k$ is free of rank $1,0,1,1$ for $0 \leq k \leq 3$ with bases $\{\mathrm{id}_{\mathbf{1}}\}$, $\emptyset$, $\{\mathrm{id}_X\}$, $\{\tau\}$.
        \item $s$ and $\tau$ generate all morphisms in the sense that every morphism is a sum of compositions of tensor products of $s$ and $\tau$ together with evaluations, coevaluations, and pivotal isomorphism (resp. and also braidings).
    \end{enumerate}
\end{definition}

Note that in the ribbon case, although the vertex is rotationally invariant, we do not require it to be fixed under the action of the three-strand braid group.

\begin{remark}
    Trivalent categories and trivalent ribbon categories, as defined in  \cite{MR3624901, MR5101335}, included some additional non-degeneracy assumptions. We will not require these non-degeneracy assumptions in this paper, and apologize for any confusion caused by reusing the terminology.
\end{remark}

Let $R$ be a ring and let $\mathsf{PlaTri}_R$ denote the pivotal category of $R$-linear combinations of unoriented planar trivalent graphs. That is, its objects are points on the line, and its morphisms are formal linear combinations of unoriented planar trivalent graphs with composition given by vertical gluing, and tensor product given by horizontal disjoint union. Similarly, let $\mathsf{RibTri}_R$ be the ribbon category of $R$-linear combinations of framed unoriented knotted trivalent graphs. In both cases, we reserve the word `diagram' for the basis vectors.

If $(\mathcal{C}, X, s, \tau)$ is a trivalent category, then the pivotal diagram calculus \cite{MR2767048} defines a full and essentially surjective functor  $\Phi_{(\cC, X, s, \tau)}: \mathsf{PlaTri}_R \rightarrow \cC$ sending the strand to $X$ and the trivalent vertex to $\tau$ (we will often abbreviate this $\Phi_{\cC}$). Similarly, if in addition $\mathcal{C}$ is ribbon, then the ribbon diagram calculus defines a functor  $\mathsf{RibTri}_R \rightarrow \cC$ sending the strand to $X$ and the trivalent vertex to $\tau$. Conversely, a full and essentially surjective pivotal functor $\Phi\colon\mathsf{PlaTri}_R\to\mathcal C$ equips $\Phi(1)$ with the induced symmetric self-duality and trivalent vertex $\Phi(\smallfig{\threevertex})$, and hence defines a trivalent category (and similarly in the ribbon case). Fullness corresponds to the condition that the self-duality and trivalent vertex generate all morphisms.

\begin{definition}
    If $(\mathcal{C},X,s,\tau)$ and $(\mathcal{C}',X',s',\tau')$ are trivalent (resp. trivalent ribbon) categories, then a trivalent (ribbon) functor between them is a (ribbon) functor $\cF: \mathcal{C} \rightarrow \mathcal{C}'$ endowed with an isomorphism $\cF(X) \cong X'$ such that the self-dualities and trivalent vertices satisfy the obvious identities. Equivalently, it is a functor $\cF: \mathcal{C} \rightarrow \mathcal{C}'$ endowed with a monoidal natural isomorphism $\cF \circ \Phi_\cC \cong \Phi_{\cC'}$. We define trivalent natural transformations, and trivalent equivalences similarly.
\end{definition}

Note that this notion of trivalent equivalence is stronger than the usual notion of equivalence.

The categories $\mathsf{PlaTri}_R$ and $\mathsf{RibTri}_R$ have `rescaling' autoequivalences.

\begin{definition} \label{def:rescaling}
    If $u,v \in R^\times$, we have pivotal autoequivalences $F_{u,v}: \mathsf{PlaTri}_R \rightarrow \mathsf{PlaTri}_R$ which send a diagram $D \in \Hom(m,n)$ with $V(D)$ vertices to $u^{\frac{m-n-V(D)}{2}} v^{V(D)} D$ (note that $m-n-V(D)$ is always even, so this formula is well-defined). We also define a ribbon autoequivalence $F_{u,v}: \mathsf{RibTri}_R \rightarrow \mathsf{RibTri}_R$ by the same formulas.
\end{definition}

In particular, $F_{u,v}$ rescales the cap by $u$, the cup by $u^{-1}$, the trivalent vertex in $\Hom(2,1)$ by $v$ and the rotation of the trivalent vertex in $\Hom(1,2)$ by $u^{-1} v$. It rescales the bigon in $\Hom(1,1)$ by $u^{-1} v^2$.
 
\begin{lemma} \label{lem:rescaling}
    If $(\mathcal{C},X,s,\tau)$ is a trivalent category, and $u,v \in R^\times$, then $(\mathcal{C},X,s,\tau)$ is equivalent (but not trivalent equivalent!) to $(\mathcal{C},X,u s,v \tau)$ via $F_{u,v}$.
\end{lemma}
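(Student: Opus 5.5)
The plan is to realize $(\mathcal{C},X,us,v\tau)$ through the composite functor $\Phi_{(\mathcal{C},X,s,\tau)} \circ F_{u,v}\colon \mathsf{PlaTri}_R \to \mathcal{C}$. First I would check that $F_{u,v}$ really is a pivotal autoequivalence of $\mathsf{PlaTri}_R$. The exponent $\frac{m-n-V(D)}{2}$ is additive in the right sense:
\begin{itemize}
\item under vertical composition $D'\circ D$ with $D\in\Hom(m,n)$ and $D'\in\Hom(n,p)$, we have $(m-n-V(D)) + (n-p-V(D')) = m-p-V(D'\circ D)$;
\item under horizontal juxtaposition, both $m-n$ and $V$ add.
\end{itemize}
So $F_{u,v}$ is strictly monoidal, with identity coherence data. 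Since the scalars are units, $F_{u,v}$ is invertible with inverse $F_{u^{-1},v^{-1}}$. Compatibility with cups and caps is built into the formula (cap scaled by $u$, cup by $u^{-1}$, and these cancel in the zigzag), so $F_{u,v}$ is pivotal. I would note in passing that planar isotopy does not change $m$, $n$ or $V(D)$, so $F_{u,v}$ is well defined on isotopy classes.

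Next I would compute what the composite does to the generators. It sends the strand to $X$, the cap to $u$ times the original evaluation, and the trivalent vertex to $v\tau$. Since the self-duality is encoded by the image of the cap/cup pairing, the induced self-duality on $X$ is $us$ (with the cup scaled by $u^{-1}$, as it must be for the zigzag relations to hold). The induced trivalent vertex is $v\tau$.

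I would then verify that $(\mathcal{C},X,us,v\tau)$ is a trivalent category. This can be done directly:
\begin{itemize}
\item $us$ is still symmetric, because the condition $(p_X)^{-1}\circ s^* = s$ is linear in $s$;
\item rotation invariance of $v\tau$ holds because the $u$ and $u^{-1}$ factors cancel in the rotation formula;
\item the bases of $\mathcal{C}^k$ rescale by units, so they remain bases;
\item generation is unchanged, because we have only multiplied the generators by units.
\end{itemize}
Alternatively, this follows from the remark that a full, essentially surjective pivotal functor out of $\mathsf{PlaTri}_R$ defines a trivalent category. The composite is full and essentially surjective because $F_{u,v}$ is an equivalence.

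The equivalence itself is the identity functor $\mathcal{C}\to\mathcal{C}$, together with the identification $\Phi_{(\mathcal{C},X,us,v\tau)} = \Phi_{(\mathcal{C},X,s,\tau)}\circ F_{u,v}$. Here $F_{u,v}$ mediates between the two diagrammatic presentations. Finally I would explain why this is not a trivalent equivalence: a trivalent equivalence would require a monoidal natural isomorphism $\Phi\cong\Phi\circ F_{u,v}$ carrying $s\mapsto us$ and $\tau\mapsto v\tau$. Any such isomorphism scales $X$ by some unit $\lambda$, which forces $\lambda^2=u$ and $\lambda=v$ after rescaling. So it exists only in special cases, and in general the two structures are not trivalent equivalent.

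The main obstacle is bookkeeping: making sure the self-duality scaling convention matches the cap/cup scalings in Definition \ref{def:rescaling}, especially the $u^{-1}v$ scaling of the rotated vertex. The rest is formal.
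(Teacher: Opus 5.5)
Your proposal is correct and is essentially the argument the paper intends; the lemma is stated there without proof, but its phrasing ``via $F_{u,v}$'' points to the same route: $F_{u,v}$ is a strict pivotal automorphism of $\mathsf{PlaTri}_R$, the composite $\Phi_{(\mathcal{C},X,s,\tau)}\circ F_{u,v}$ is the diagram calculus of $(\mathcal{C},X,us,v\tau)$, and the identity functor of $\mathcal{C}$ gives the equivalence. You are also right to read the parenthetical as a generic warning, since it fails when $u=v^2$. However, your $\lambda$-argument only rules out trivalent equivalences whose underlying functor is the identity; to rule out arbitrary ones, use that an $R$-linear trivalent functor must preserve the bigon coefficient $b$, which the rescaling changes to $u^{-1}v^2 b$.
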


We will often describe trivalent (ribbon) categories by describing them as a quotient of $\mathsf{PlaTri}_R$ (resp. $\mathsf{RibTri}_R$) by certain relations. This means considering the quotient of $\mathsf{PlaTri}_R$ (resp. $\mathsf{RibTri}_R$) by the minimal pivotal (resp. ribbon) ideal generated by the relations. Such a presented category clearly has the universal property that to give a functor out of it is to give a pivotal (resp. ribbon)
category with a distinguished self-duality and rotationally invariant trivalent vertex satisfying all the relations.

\subsection{Deformations}

If $\mathcal{C}$ is an $R$-linear pivotal (resp. ribbon) category and $R \rightarrow S$ is a map of rings, then we can form the base extended category $\mathcal{C} \otimes_R S$ whose objects are the same as those of $\mathcal{C}$ but the morphism spaces are given by $\Hom_{\mathcal{C}}(Y,Z) \otimes_R S.$ We will denote the base extension along $R \rightarrow R/I$ by $\mathcal{C}/I$.

\begin{definition}
    If $(\mathcal{C}, X, s, \tau)$ is a finite free $R$-linear trivalent (resp. trivalent ribbon) category, then an infinitesimal deformation of $\mathcal{C}$ is a finite free $R[h]/h^2$-linear trivalent (resp. trivalent ribbon) category $(\mathcal{C}', X', s', \tau')$ such that the base extension $(\mathcal{C}', X', s', \tau')/h$ is trivalent equivalent to $(\mathcal{C}, X, s, \tau)$. We consider two infinitesimal deformations equivalent if there exist $u,v\in (R[h]/h^2)^\times$ with $u\equiv v\equiv 1 \pmod h$ such that, after replacing the self-duality and trivalent vertex of one deformation by $us'$ and $v\tau'$, respectively, the two deformations are trivalently equivalent.
\end{definition}

Our notion of an infinitesimal deformation of $(\mathcal{C},X,s,\tau)$ agrees with the usual notion of a pivotal infinitesimal deformation of $\mathcal{C}$ in the sense of \cite[Definition~2.1]{MR1641842} (or \cite{MR1664995} in the ribbon case), provided $6\in R^\times$. First we note that the self-duality and trivalent vertex automatically lift to a deformation of $\mathcal{C}$ since the allowed rotational eigenvalues on $\mathcal{C}^2$ and $\mathcal{C}^3$ cannot deform because $6 \in R^\times$. Second, these morphisms still generate the deformation by Nakayama's
    lemma, since every morphism space is finitely generated. It is also not important to distinguish between infinitesimal deformations of $\mathcal{C}$ and infinitesimal deformations of its additive and idempotent completion, because the completion of a deformation is a deformation of the completion by \cite[Proposition~9.1.1]{EGHLSVY11}.

One can similarly define order $n$ deformations and formal $1$-parameter families of deformations by replacing $R[h]/h^2$ with $R[h]/h^{n+1}$ or $R[[h]]$.

\section{Deformations of quantum \texorpdfstring{$G_2$}{G 2}} \label{sec:G2}

\begin{definition}
Let $R$ be a commutative ring. For $x \in R$, let $G_2(R,x)$ be the $R$-linear trivalent category defined by the following relations:

\begin{align*}
\unknot\; &= x^5 + x^4 -5x^3 -4x^2 +6x +3
\end{align*}

\begin{align*}
    \loopvertex\;&=0&
      \twogon\;&= -(x+1)(x^2-2)\;\onestrandid&
        \threegon\; &= x^2-1 \;\threevertex
\end{align*}

\begin{align*}
\ngon[45]{4} &=  -x \left(\;\drawI \; +\; \drawH \; \right) +  (x+1) \left(\; \cupcap \; + \; \twostrandid \; \right) \displaybreak[1] \\[5pt]
\ngon[90]{5} &=  \left(\mathfig{0.1}{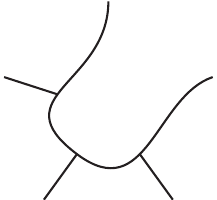} + \mathfig{0.1}{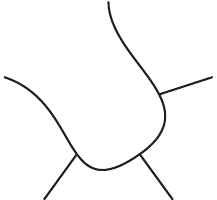} + \mathfig{0.1}{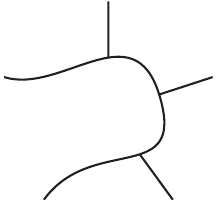} + \mathfig{0.1}{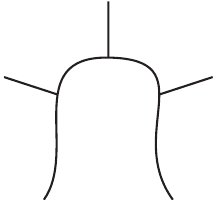} + \mathfig{0.1}{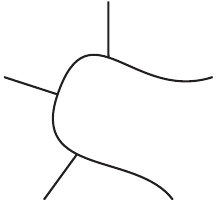} \right) \\
& \qquad -  \left(\mathfig{0.1}{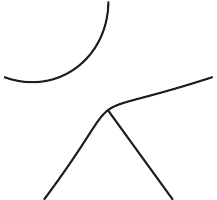} + \mathfig{0.1}{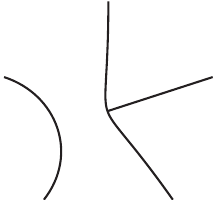} + \mathfig{0.1}{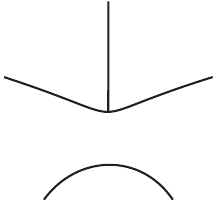} + \mathfig{0.1}{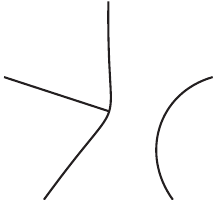} + \mathfig{0.1}{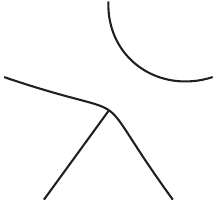}\right) \displaybreak[1]
\end{align*}
\end{definition}

Each of these relations can be thought of as `reducing' relations where you can replace the diagram on the left with the `simpler' diagram on the right. If a diagram does not contain any pentagon or smaller internal faces (so no simplifications are possible) we call the diagrams non-elliptic. Using confluence \cite{MR2308953}, we have the following result.

\begin{proposition}
    Non-elliptic diagrams with $n$-boundary points form an $R$-basis of the $n$-box space of $G_2(R,x)$.
\end{proposition}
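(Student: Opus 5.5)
The approach is the standard confluence (diamond lemma) argument of \cite{MR2308953}, carried out over an arbitrary commutative ring $R$. Since every relation has a diagram with coefficient $1$ on the left, no division is needed. Orient each defining relation as a rewriting rule that replaces the left-hand diagram (a closed loop, a loop-vertex, or an internal face with at most five sides) by the right-hand linear combination. The $n$-box space of $G_2(R,x)$ is the quotient of the free $R$-module on planar trivalent diagrams with $n$ boundary points by the span of all local instances of the relations. We then need two things: that non-elliptic diagrams span, and that they are linearly independent.

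For spanning, I will use a terminating order: order diagrams first by number of vertices, then by number of closed components (loops). Each relation strictly decreases this complexity.
\begin{itemize}
\item The loop relation removes a component.
\item The loop-vertex, bigon and triangle relations remove vertices.
\item In the square relation, the square has $4$ internal vertices and the right-hand side has at most $2$.
\item In the pentagon relation, the pentagon has $5$ vertices and the right-hand side has at most $3$.
\end{itemize}
Hence rewriting terminates, and every diagram is an $R$-combination of diagrams to which no rule applies. A connected-or-not planar trivalent diagram containing no loop, no loop-vertex and no face with at most $5$ sides is non-elliptic by definition, so these diagrams span.

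For independence, the diamond lemma (in the planar algebra form of \cite{MR2308953}) says the images are independent once every ambiguity resolves, i.e.\ once overlapping reductions are confluent. Overlaps occur where two reducible subconfigurations (faces of size at most $5$, loops, loop-vertices) share vertices or edges. By locality, it suffices to check finitely many minimal overlap diagrams:
\begin{itemize}
\item two small faces sharing an edge, e.g.\ square–pentagon or pentagon–pentagon;
\item a face overlapping itself in degenerate ways;
\item the inclusion-type ambiguities, where one face sits inside another after a reduction.
\end{itemize}
For each, I reduce both ways to non-elliptic form and compare. These are polynomial identities in $x$ with integer coefficients. Once they are checked over $\mathbb{Z}[x]$, they hold over any $R$ by base change. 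This is the main obstacle, and it is exactly Kuperberg's computation in \cite{MR1265145}, where the relations were chosen so that these overlaps resolve (e.g.\ the pentagon–pentagon overlap forces the specific loop value). I would cite or redo those checks over $\mathbb{Z}[x]$, noting that no step divides by anything.

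One subtlety deserves care. \cite{MR2308953} works over a field, but the diamond lemma only needs that reductions be leading-coefficient-one, which holds here. Alternatively, prove the statement over $\mathbb{Z}[x]$, where the reduced-word module is free on non-elliptic diagrams, and observe that the presentation commutes with base change $\mathbb{Z}[x]\to R$. Right exactness of $\otimes$ gives that the quotient for $G_2(R,x)$ is the base change of the universal one, hence free on the same basis.
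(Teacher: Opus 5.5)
Your proposal is correct and follows the paper's route. The paper's proof is just the appeal to the confluence theory of \cite{MR2308953}; your termination order (vertices, then closed loops) and the base change from $\mathbb{Z}[x]$ via right exactness of $\otimes$ simply make explicit why that citation gives a basis over an arbitrary commutative ring $R$. One correction of attribution: the full check that every overlap resolves comes from \cite{MR2308953}, whereas the adjacent-face expansions of \cite{MR1265145} (used in the proof of Theorem~\ref{thm:G2recognition}) are only necessary conditions on the coefficients, so cite the former for that step.
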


\begin{corollary}
    $G_2(R,x)$ is a finite free trivalent category.
\end{corollary}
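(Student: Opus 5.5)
The corollary asks for three things: each $n$-box space is finitely generated and free, the category is a trivalent category, and the morphism spaces are free in general. I will derive all of them from the preceding proposition together with pivotality.

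\textbf{Freeness of all Hom spaces.} By construction $G_2(R,x)$ is a quotient of $\mathsf{PlaTri}_R$. Every object is therefore isomorphic to $X^{\otimes n}$, where $X$ is the single strand. Using the pivotal structure (bending strands with cups and caps), $\Hom(X^{\otimes a},X^{\otimes b})$ is isomorphic as an $R$-module to the $(a+b)$-box space. This isomorphism sends diagrams to diagrams, so it carries non-elliptic diagrams to non-elliptic diagrams.

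\textbf{Finiteness.} By the preceding proposition, the $n$-box space is free with basis the non-elliptic diagrams with $n$ boundary points. For fixed $n$ there are only finitely many such diagrams. To see this, use Kuperberg's Euler characteristic argument: a planar trivalent graph in a disk in which every internal face has at least $6$ sides has $\sum_f (6-\deg f)\geq 6$ coming only from the boundary faces. This bounds the number of vertices in terms of $n$. Hence every Hom space is finitely generated and free, and the category is finite free.

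\textbf{Trivalent category axioms.} First, the vertex is rotationally invariant, because the generating vertex in $\mathsf{PlaTri}_R$ is unoriented and hence rotationally invariant, and the quotient functor is pivotal. Second, generation by $s$ and $\tau$ is automatic for a quotient of $\mathsf{PlaTri}_R$. Third, the ranks of $\mathcal{C}^k$ for $k\le 3$ must be $1,0,1,1$ with the prescribed bases. I will check this by enumerating non-elliptic diagrams:
\begin{itemize}
\item with $0$ boundary points, only the empty diagram, since any closed trivalent planar graph has a face with at most $5$ sides;
\item with $1$ boundary point, none;
\item with $2$ boundary points, only the strand;
\item with $3$ boundary points, only the vertex.
\end{itemize}
These follow from the same Euler characteristic count, since for $n\le 3$ the boundary contribution is too small to allow any vertices beyond the obvious ones.

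The only substantive step is the Euler characteristic count that gives finiteness and the small-$n$ enumeration. I regard it as standard, following Kuperberg.
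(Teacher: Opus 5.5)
The structure of your proposal is what the paper intends. The paper gives no separate proof: the corollary is read off from the proposition that non-elliptic diagrams form an $R$-basis, with generation and rotational invariance inherited from $\mathsf{PlaTri}_R$. Your enumeration for $n\le 3$ is also fine.

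The gap is the finiteness claim for general $n$. The identity you use is
$\sum_{\text{internal}}(6-\deg f)+\sum_{\text{boundary}}(6-\deg f)=6+n$, with boundary arcs counted in the boundary faces. It holds for \emph{every} trivalent graph in the disk and says nothing about the number $N$ of trivalent vertices. Indeed, with $2E=3N+n$ and $F=1+\tfrac{N+n}{2}$, the $N$'s cancel identically. Hexagons contribute $0$, so the count only constrains the boundary faces.

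For instance, take a hexagonal patch of side $k$ in the honeycomb and put a leg on each boundary vertex of valence two. It has $n=6k$ and $N=6k^2$, and it satisfies the identity with six boundary faces of degree $4$ and the rest of degree $5$. So ``this bounds the number of vertices in terms of $n$'' does not follow from the curvature count.

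Finiteness needs a further input. One option is an actual isoperimetric inequality for trivalent disks whose interior faces have at least six sides. This is the quadratic bound familiar from $C(6)$ small cancellation theory, and it needs a peeling or induction argument, not a single Euler count.

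The cheapest fix in this paper's context goes through the complex case:
\begin{itemize}
\item The set of non-elliptic diagrams with $n$ boundary points does not depend on $R$ or $x$.
\item For $R=\mathbb{C}$ and $x=q+q^{-1}$ with $q$ not a root of unity, the proposition makes these diagrams linearly independent in the $n$-box space.
\item By Kuperberg's theorem (quoted just after the corollary), that box space is $\Hom(V^{\otimes n},\mathbf{1})$ for the $7$-dimensional representation $V$ of $U_q(\mathfrak{g}_2)$, which is finite-dimensional.
\end{itemize}
Hence there are only finitely many non-elliptic diagrams for each $n$. This is exactly the standard fact the paper silently takes from Kuperberg.
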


Note that $G_2(\mathbb{C},q+q^{-1})$ is exactly the definition of Kuperberg's $G_2$ spider. In particular, we have the following two results. 

\begin{theorem}\cite{MR1403861}
    If $q$ is not a root of unity, then the additive idempotent completion of $G_2(\mathbb{C},q+q^{-1})$ is equivalent to the usual category of Type $1$ representations of the quantum group $U_q(\mathfrak{g}_2).$
\end{theorem}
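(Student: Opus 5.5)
The statement is the classical theorem of Kuperberg (with Lusztig/Kuperberg's identification), so the plan is to go through the usual route from the spider to the representation category.

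First, construct a pivotal functor $\Phi\colon G_2(\mathbb{C},q+q^{-1}) \to \mathrm{Rep}(U_q(\mathfrak{g}_2))$. It sends the strand to the $7$-dimensional irreducible $V$ and the trivalent vertex to the (up to scalar unique) nonzero map $V\otimes V\to V$. Such a map exists because $\mathrm{Hom}(V^{\otimes 3},\mathbf{1})$ is one-dimensional at generic $q$; this is a deformation argument from $q=1$, where $V$ is the octonion imaginaries and the vertex is the cross product. The self-duality is a symmetric invariant form $s$. Rescaling $s$ and $\tau$ as in Lemma \ref{lem:rescaling} normalizes the loop value and the bigon to the stated polynomials in $x$; the loop value is the quantum dimension of $V$.

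The remaining relations hold by dimension counting. Since $\dim\mathrm{Hom}(V^{\otimes k},\mathbf{1})=1,0,1,1,4,10$ for $k\le5$, the spaces are spanned by the non-elliptic (tree-like) diagrams. Hence the loop-vertex, triangle, square and pentagon each map to some linear combination of simpler diagrams. The coefficients are then pinned down by capping off and using the already-known relations. This is exactly Kuperberg's computation, which produces the stated coefficients.

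Second, show $\Phi$ is fully faithful on the subcategory generated by $V$. By the proposition above, the $n$-box space of $G_2(\mathbb{C},x)$ has a basis of non-elliptic diagrams. Kuperberg's counting (or the Lusztig canonical basis comparison) shows that the number of non-elliptic diagrams with $n$ boundary points equals $\dim \mathrm{Hom}(V^{\otimes n},\mathbf{1})$ at generic $q$, since both agree with the classical $q=1$ count.

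\textbf{Main obstacle.} The hard step is fullness, equivalently injectivity of $\Phi$ on the $n$-box spaces. For this I would use the nondegeneracy of the pairing on the spider: the Gram matrix of non-elliptic diagrams has determinant that is a nonzero rational function of $q$, nonvanishing away from roots of unity. I would prove this by a leading-term argument in $q$, which shows the Gram matrix is triangular with respect to the minimal-cut-path filtration. Once $\Phi$ is fully faithful on the $n$-box spaces, pivotality gives fully faithfulness on all $\mathrm{Hom}(V^{\otimes a},V^{\otimes b})$.

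Third, pass to completions. Every irreducible type~$1$ module appears as a summand of some $V^{\otimes n}$, because $V$ generates the representation ring (the fundamental weights occur in $V$ and $V^{\otimes 2}$). Semisimplicity at generic $q$ then makes the additive idempotent completion of the image equivalent to the full representation category. Combined with fully faithfulness, this yields the equivalence.
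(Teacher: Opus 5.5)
\textbf{The Gram-determinant step does not reach every non-root of unity.} The paper gives no proof of this statement; it is quoted from Kuperberg \cite{MR1403861}. Your outline has a reasonable architecture, but the step you identify as the main obstacle fails as proposed.

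A leading-term argument in $q$ can only show that the Gram determinant $D_n\in\mathbb{Z}[x]$ of the non-elliptic webs with $n$ boundary points is not identically zero. It says nothing about where $D_n$ vanishes. So it proves faithfulness only for $q$ outside the finite zero set of each $D_n$ (e.g.\ for every transcendental $q$), not for every $q$ that is not a root of unity. Nothing in such an argument excludes a factor like $x-3$, which vanishes at the non-root of unity $q=(3+\sqrt5)/2$.

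To finish along your route you would need an exact factorization of every $D_n$ into factors vanishing only at roots of unity. For $n=4$ this can be done by hand. One gets $D_4=\delta^4(x+1)^5(x-1)(x+2)^2(x^2-2)^3\Psi(x)^2$, where $\delta$ is the loop value and $\Psi$ is the minimal polynomial of $2\cos(2\pi/15)$. Your sketch has no mechanism producing such formulas for general $n$.

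Separately, the Gram matrix is not triangular in any ordering. It is symmetric and has nonzero off-diagonal entries: in the $4$-box space the identity and the cup--cap pair to a single loop. At best a degree count shows the diagonal dominates, which again yields only genericity.

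\textbf{A route that works.} Prove independence of the images $\Phi(w)$ directly rather than through the pairing. Expand $\Phi(w)\in\Hom(V^{\otimes n},\mathbf 1)$ in the basis of tensor products of dual weight vectors. Use minimal cut paths to attach to each non-elliptic web a distinguished basis tensor with two properties: its coefficient in $\Phi(w)$ is explicitly nonzero whenever $q$ is not a root of unity, and it is absent from webs earlier in a suitable order. Triangularity with nonvanishing pivots then gives independence at each such $q$, with no need to locate the zeros of a determinant. This is essentially Kuperberg's argument.

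\textbf{Two smaller gaps in your first step.} First, equality of dimensions does not show that the $4$ and $10$ non-elliptic diagrams span $\Hom(V^{\otimes 4},\mathbf 1)$ and $\Hom(V^{\otimes 5},\mathbf 1)$. You must prove they are linearly independent at the given $q$ before Theorem \ref{thm:G2recognition} applies. Second, the loop value is unchanged by the rescalings of Lemma \ref{lem:rescaling}, so it cannot be normalized. Recognition only yields $G_2(\mathbb{C},x')$ for some $x'$ with $\delta(x')=\dim_q V$. Since $\delta$ has degree $5$, this does not force $x'=q+q^{-1}$. You need further rescaling-invariant data computed in the representation category, such as the ratio of the triangle and bigon coefficients, to pin down $x'$.
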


\begin{theorem}\cite{OstrikSnyderG2}
    If $q$ is a root of unity of order not in $\{3,4,6,8,16,24\}$ then the additive and idempotent completion of $G_2(\mathbb{C},q+q^{-1})$ is equivalent to the category of tilting modules of the Lusztig form of the quantum group $U_q(\mathfrak{g}_2).$
\end{theorem}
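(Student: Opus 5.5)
This is a cited result (Ostrik–Snyder), so the plan is to reconstruct its structure: produce a functor out of $G_2(\mathbb{C},q+q^{-1})$, check it is faithful, and then match idempotent completions.

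The construction comes first. Let $V$ be the $7$-dimensional Weyl module of the Lusztig form $U_q(\mathfrak{g}_2)$. For $q$ not of small order, $V$ is a tilting module; it is irreducible except at a few small orders, which is part of why those orders are excluded. I would fix a symmetric invariant form $s\colon V \to V^*$ and an invariant map $\tau\colon V\otimes V\to V$. Both come from the integral (Lusztig) form: they are defined over $\mathbb{Z}[q^{\pm1}]$ at generic $q$ and can be specialized to the root of unity. Kuperberg's computations hold over the generic ring, so the skein relations of $G_2(\mathbb{Z}[q^{\pm 1}], q+q^{-1})$ are satisfied by $(V,s,\tau)$ integrally. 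Specializing then gives a pivotal functor
\[ \Phi\colon G_2(\mathbb{C},q+q^{-1}) \to \mathrm{Tilt}(U_q(\mathfrak{g}_2)). \]

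Next I would show $\Phi$ is fully faithful onto the full subcategory on the objects $V^{\otimes n}$. The key input is that $\dim \Hom(V^{\otimes a},V^{\otimes b})$ is the same at $q$ as at generic $q$. This holds because tensor products of tilting modules are tilting, and Hom spaces between tilting modules have dimension given by Weyl-filtration multiplicities, which are independent of $q$. By the Proposition, the non-elliptic diagrams form a basis, and their number equals the generic dimension by Kuperberg. So it suffices to prove that $\Phi$ is surjective on Hom spaces, or equivalently injective. This is the main obstacle, and I would handle it with an integral argument. Over $\mathbb{Z}[q^{\pm1}]$ localized at the relevant cyclotomic prime, both the diagram module and the integral form of $\Hom(V^{\otimes a},V^{\otimes b})$ are free of the same rank. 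The map between them is an isomorphism generically, and I would show it stays an isomorphism after specialization by checking that the determinant of the Gram (pairing) matrix of non-elliptic diagrams is a unit at that prime. Tilting theory says the Hom pairing into tilting modules is perfect up to the negligible ideal, so I would compare the Gram determinant of the diagram basis with the corresponding integral pairing. Concretely, I would try to show the determinant is a product of quantum integers $[k]_q$ that vanish only at the excluded orders or in negligible directions. An alternative route to fullness is to prove that $\mathrm{End}(V^{\otimes n})$ is generated by $s$, $\tau$ and cups/caps, via a quantum first fundamental theorem for $G_2$ at roots of unity, perhaps deduced from type $B_3$ restriction.

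Finally, since every indecomposable tilting module is a summand of some $V^{\otimes n}$ (its highest weights generate), taking the additive idempotent completion of the fully faithful image gives all of $\mathrm{Tilt}$. This yields the equivalence, with the excluded orders $\{3,4,6,8,16,24\}$ being exactly those where the Gram determinant degenerates or $V$ fails to be a tilting generator.
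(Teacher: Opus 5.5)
The paper gives no proof of this statement (it is quoted from Ostrik--Snyder, work in progress), so your outline has to stand on its own. Your reduction is sound. $V^{\otimes a}$ is tilting with $q$-independent Weyl multiplicities, so $\dim\Hom_{\mathrm{Tilt}}(V^{\otimes a},V^{\otimes b})$ equals the number of non-elliptic webs, and it suffices to prove $\Phi$ injective. Two checks remain: that the spider normalization of $(s,\tau)$ is integral and nonzero at $\zeta$, and that $T(\omega_2)$, hence every indecomposable tilting module, is a summand of some $V^{\otimes n}$.

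The genuine gap is the injectivity step: the Gram-determinant criterion cannot work. If $\Phi(f)=0$ then $\mathrm{tr}(gf)=\mathrm{tr}(\Phi(g)\Phi(f))=0$ for all $g$, so $\ker\Phi$ lies in the negligible ideal automatically. Trace pairings therefore see $\Phi$ only modulo negligible morphisms. At best they identify the semisimplifications, and they are blind to exactly the morphisms that distinguish $\mathrm{Tilt}$ from its semisimple quotient.

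Worse, the check is bound to fail. In $\mathrm{Tilt}$ the rank of the pairing $\Hom(\mathbf{1},V^{\otimes n})\times\Hom(V^{\otimes n},\mathbf{1})\to\mathbb{C}$ is the number of summands of $V^{\otimes n}$ isomorphic to $\mathbf{1}$. At a root of unity this is strictly less than $\dim\Hom(\mathbf{1},V^{\otimes n})$ for large $n$: every map from $\mathbf{1}$ to an indecomposable summand $T(\lambda)\not\cong\mathbf{1}$ is negligible, and summands with $\Delta(0)$ in their Weyl filtration do occur. If the web Gram determinant were nonzero at $\zeta$, then $\Phi$ would be injective, hence an isomorphism by your dimension count, and this pairing would be perfect. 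So for large $n$ the determinant vanishes at every root of unity, and ``vanishing only in negligible directions'' is not a property a determinant can have.

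The quantity that actually controls specialization over the local ring $A$ is $\det\Phi_A$, the index of the web lattice in $\Hom_A(V_A^{\otimes a},V_A^{\otimes b})$. Recovering it from Gram determinants would need, for all $n$, the $\zeta$-adic valuations of both the $G_2$ web Gram determinants and the discriminants of the trace forms on the integral tilting Hom lattices. Neither is available, and computing them is no easier than the theorem.

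Your fallback, a first fundamental theorem at roots of unity, just restates the missing fullness. It also cannot be obtained by restriction from $B_3$, since there is no quantum analogue of $G_2\subset SO_7$: the $q$-dimensions of the two $7$-dimensional modules have exponents $\{0,\pm1,\pm4,\pm5\}$ and $\{0,\pm1,\pm3,\pm5\}$, which no substitution $q\mapsto q^c$ relates. What is missing is an argument that detects negligible morphisms. One option is a leading-term argument showing that the images of the non-elliptic webs in a tensor basis of $V_A^{\otimes n}$ remain linearly independent modulo the maximal ideal. Another is a direct proof that the web lattice is saturated in the integral Hom lattice.

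Finally, your explanation of the excluded orders is off. Large-$n$ Gram determinants vanish at every root of unity, so they cannot single out a finite list. By the paper's remark, only $q=\pm i$ is a genuine exception (there $V$ is not tilting); the remaining orders are excluded because the argument needs modification. For what it is worth, $\{3,4,6,8\}$ are exactly the orders at which one of the coefficients $x+1$, $x$, $x^2-1$, $x^2-2$ in the bigon, triangle and square relations vanishes.
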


\begin{remark}
    When $q = \pm i$ the additive and idempotent completion of the Kuperberg spider is \emph{not} equivalent to the category of tilting modules, because the $7$-dimensional representation is not tilting! By contrast we expect that for the other $q$ on the excluded list, the two categories are equivalent, but the argument from \cite{OstrikSnyderG2} must be modified slightly.
\end{remark}

A close reading of Kuperberg's paper \cite{MR1265145} shows that he proves the following generalization of his main result where we work over rings instead of fields.

\begin{theorem} \label{thm:G2recognition}
    Let $(\mathcal{C},X, s, \tau)$ be an $R$-linear trivalent category. Suppose that the $k$-box spaces for $k \leq 5$ are free $R$-modules with bases the planar trivalent graphs with no internal closed faces:
    \begin{multline*}
        \left\{ \right\}; \quad \emptyset; \quad 
        \left\{\; \vertonestrandid \; \right\}; \quad
        \left\{\threevertex\right\}; \;
        \left\{\; \drawH, \drawI, \cupcap, \twostrandid \;\right\}\\
        \left\{\mathfig{0.08}{tree1}, \mathfig{0.08}{tree2}, \mathfig{0.08}{tree3}, \mathfig{0.08}{tree4}, \mathfig{0.08}{tree5},   \mathfig{0.08}{forest1}, \mathfig{0.08}{forest2}, \mathfig{0.08}{forest3}, \mathfig{0.08}{forest4}, \mathfig{0.08}{forest5}\right\}.
    \end{multline*}
Let $a$, $b$, $e_1$, and $e_2$ be the coefficients obtained by rewriting the square and pentagon in these bases:
\begin{align*}
    \ngon[45]{4}
        &= -a\left(\;\drawI+\drawH\right)
        +b\left(\cupcap+\twostrandid \; \right)
        \displaybreak[1] \\[5pt]
    \ngon[90]{5}
        &= e_1\left(
            \mathfig{0.1}{tree1}
            +\mathfig{0.1}{tree2}
            +\mathfig{0.1}{tree3}
            +\mathfig{0.1}{tree4}
            +\mathfig{0.1}{tree5}
        \right) \\
        &\qquad
        +e_2\left(
            \mathfig{0.1}{forest1}
            +\mathfig{0.1}{forest2}
            +\mathfig{0.1}{forest3}
            +\mathfig{0.1}{forest4}
            +\mathfig{0.1}{forest5}
        \right).
\end{align*}
If $e_1$ is invertible in $R$, then there is a trivalent pivotal functor
\[
    G_2(R,e_1^{-1}a)
    \longrightarrow
    (\mathcal{C},X,e_1^{-1}s,e_1^{-1}\tau).
\]
\end{theorem}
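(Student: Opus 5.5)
Follow Kuperberg's argument essentially verbatim, keeping track of where division occurs. Since the $k$-box spaces for $k\le 5$ are free with the stated bases, write the loop $\unknot = d$, the lollipop (a multiple of the empty $1$-box, hence $0$ since $\mathcal{C}^1=0$), the bigon $=\beta\,\mathrm{id}_X$, and the triangle $=\gamma\,\tau$ for some $d,\beta,\gamma\in R$. Rotational invariance of $\tau$, together with the rotational symmetry of the square and pentagon, forces the coefficients of the square and pentagon to be constant on rotation orbits of the basis; this justifies the stated forms with $a,b,e_1,e_2$. The goal is to express $d,\beta,\gamma,b,e_2$ as explicit polynomials in $a$ and $e_1$ (with $e_1^{-1}$ allowed), and then check that after rescaling via Lemma~\ref{lem:rescaling} with $u=v=e_1^{-1}$ they match the defining relations of $G_2(R,x)$ with $x=e_1^{-1}a$.

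The identities come from evaluating certain graphs in two ways, as Kuperberg does. First, attach a strand to two adjacent boundary points of the square relation, or cap it off, to obtain a relation among $4$-box or $3$-box basis elements. Reducing both sides and comparing coefficients in the free bases gives equations linking $\beta,\gamma$ to $a,b,d$. Next, do the same with the pentagon: cap off adjacent boundary points (turning the pentagon into a square in a $3$-box), or glue a vertex onto two adjacent endpoints (a square adjacent to a pentagon). Reducing via the square relation and the pentagon relation in two different orders and comparing coefficients in the $5$-box basis gives polynomial identities. Kuperberg's computation produces equations of the form $e_1\gamma = \dots$, $e_1 b = \dots$, and so on. Because the bases are free over $R$, comparing coefficients is legitimate over an arbitrary ring. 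The only nontrivial divisions in his derivation are by the pentagon tree coefficient, which is exactly $e_1$, and we have assumed it invertible.

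After the rescaling $s\mapsto e_1^{-1}s$, $\tau\mapsto e_1^{-1}\tau$, the tree coefficient becomes $1$ and the square's $I/H$ coefficient becomes $-x$ with $x=e_1^{-1}a$. The solved equations then force $e_2=-1$, $b=x+1$, triangle $=x^2-1$, bigon $=-(x+1)(x^2-2)$, and $d=x^5+x^4-5x^3-4x^2+6x+3$. Hence every defining relation of $G_2(R,x)$ holds in the rescaled category, and by the universal property of the presented category there is a trivalent pivotal functor $G_2(R,e_1^{-1}a)\to(\mathcal{C},X,e_1^{-1}s,e_1^{-1}\tau)$.

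The main obstacle is auditing Kuperberg's elimination to confirm two things: that it uses only subtraction, multiplication, and division by $e_1$, with no hidden division by integers or by other quantities such as $a$ or $d$ that are nonzero over $\mathbb{C}$ in his setting; and that it uses no field-specific reasoning such as dimension counts. My first step would be to write out the two-way evaluations of the square-with-a-capped-edge and the pentagon-glued-to-a-vertex explicitly, and to verify that the resulting linear system is triangular with pivots that are units or powers of $e_1$.
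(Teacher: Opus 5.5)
Your framework matches the paper's: read off the polygon coefficients in the free bases, use rotation to get the symmetric forms, rescale by $e_1^{-1}$ so that $e_1=1$, solve Kuperberg's polynomial system, and invoke the universal property of $G_2(R,x)$. Those parts are fine. The gap is the claim that carries all the weight, namely that Kuperberg's elimination divides only by $e_1$. That claim is false.

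As written, Kuperberg obtains the loop value (his $a$) from his equation (1), and this step divides by the square coefficient $d_2$. Concretely, in your notation, cap two adjacent legs of the square and reduce the resulting bigon. This gives
\[
\beta^2 = -a\beta + b\,(d+1).
\]
So $d$ enters only through the product $b(d+1)$, and solving for $d$ requires inverting $b$, which equals $x+1$ in $G_2(R,x)$. Nothing in the hypotheses makes $b$ a unit. It vanishes over $\mathbb{C}$ at $x=-1$, and it is nilpotent in $\mathbb{C}[h]/h^2$ for a deformation of that point, which is exactly the situation the ring-theoretic statement is meant to cover. So the audit you postpone would fail at this step, and ``follow Kuperberg verbatim'' does not prove the theorem over an arbitrary $R$.

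The missing idea is to determine $d$ from a different two-face configuration. Cap two adjacent legs of the pentagon instead. Reducing the bigon and then the resulting triangle gives $\beta\gamma\,\tau$. Expanding the pentagon, the trees contribute $e_1(2\beta+\gamma)\,\tau$ and the forests contribute $e_2(d+2)\,\tau$. Hence
\[
\beta\gamma = e_1(2\beta+\gamma) + e_2(d+2),
\]
which is Kuperberg's equation $bc = 2e_1b+2e_2+ae_2+e_1c$ (in his notation $a,b,c$ are the loop, bigon and triangle values). After rescaling, $e_1=1$, and Kuperberg has already derived $e_2=-1$ by this point. So this gives $d = 2\beta+\gamma-\beta\gamma-2$ with no division at all. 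Substituting $\beta=-(x+1)(x^2-2)$ and $\gamma=x^2-1$ recovers $x^5+x^4-5x^3-4x^2+6x+3$. This single substitution is exactly the modification the paper makes (in its footnote), and with it the rest of your argument goes through.
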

\begin{proof}
    Following Kuperberg, we express each small polygon (circle, lollipop, bigon, triangle, square, and pentagon) as a sum of basis vectors. By expanding pairs of adjacent faces in two different ways we get a number of polynomial equations between these variables, which appear on \cite[p. 8]{MR1265145}. Since $e_1 \in R^\times$, we rescale using \ref{lem:rescaling} to assume wlog $e_1 = 1$. Then, following Kuperberg, we can solve for all the other variables.\footnote{As written, Kuperberg divides by $d_2$ when he solves for his variable $a$ using equation (1). Instead we can solve for $a$ using his equation $bc = 2e_1 b + 2e_2 + a e_2 + e_1 c$, since he's already solved $e_2=-1$.} 
\end{proof}

From this ring-theoretic version of Kuperberg's result we can immediately classify infinitesimal deformations of $G_2(\mathbb{C},x)$.

\begin{corollary}
    Any infinitesimal deformation of $G_2(\mathbb{C},x)$ is equivalent to $G_2(\mathbb{C}[h]/h^2,x')$ for $x' \in \mathbb{C}[h]/h^2$ with $x' \equiv x \pmod h$.
\end{corollary}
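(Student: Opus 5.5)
Let $(\mathcal{C}',X',s',\tau')$ be an infinitesimal deformation of $G_2(\mathbb{C},x)$ over $R=\mathbb{C}[h]/h^2$. The plan is to check the hypotheses of Theorem~\ref{thm:G2recognition} for $\mathcal{C}'$, obtain a functor from $G_2(R,x')$, and then show that this functor is a trivalent equivalence.

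\textbf{Step 1: the small box spaces.} The box spaces $\mathcal{C}'^k$ are finite free $R$-modules, and reducing mod $h$ gives the box spaces of $G_2(\mathbb{C},x)$. For $k\le 5$, the planar trivalent graphs with no internal faces form a basis of $G_2(\mathbb{C},x)^k$: they are non-elliptic, so the Proposition applies, and their number is $1,0,1,1,4,10$. The same diagrams evaluated in $\mathcal{C}'$ reduce mod $h$ to this basis. By Lemma~\ref{lem:projectiveNakayama} with $I=(h)$, they therefore form an $R$-basis of $\mathcal{C}'^k$.

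\textbf{Step 2: apply Theorem~\ref{thm:G2recognition}.} Expand the square and pentagon of $\mathcal{C}'$ in these bases. By the rotational symmetry of the pictures they take the shape required in Theorem~\ref{thm:G2recognition}, with coefficients $a',b',e_1',e_2'\in R$. Reducing mod $h$ recovers the coefficients of $G_2(\mathbb{C},x)$, where $e_1=1$. Since $e_1'\equiv 1 \pmod h$, it is a unit in $R$. Theorem~\ref{thm:G2recognition} then gives a trivalent functor
\[
\mathcal{F}\colon G_2(R,x')\to(\mathcal{C}',X',e_1'^{-1}s',e_1'^{-1}\tau'), \qquad x'=e_1'^{-1}a'.
\]
Reducing mod $h$ gives $x'\equiv x$. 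Also $u=v=e_1'^{-1}\equiv 1 \pmod h$, so this rescaling is allowed by the definition of equivalence of deformations.

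\textbf{Step 3: $\mathcal{F}$ is an equivalence.} Since $\mathcal{C}'$ is generated by $X'$, the functor $\mathcal{F}$ is essentially surjective. It is full because $s'$ and $\tau'$ generate all morphisms of $\mathcal{C}'$, and these are images of morphisms of $G_2(R,x')$. It remains to show $\mathcal{F}$ is faithful. On each box space, $\mathcal{F}$ is a map $G_2(R,x')^n\to\mathcal{C}'^n$ of finite free $R$-modules. The source is free with basis the non-elliptic diagrams, by the Corollary following the Proposition. Reduced mod $h$, this map is the trivalent functor $G_2(\mathbb{C},x)\to \mathcal{C}'/h\cong G_2(\mathbb{C},x)$ induced by the trivalent equivalence. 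That functor sends diagrams to the same diagrams, so it is the identity and in particular an isomorphism. Lemma~\ref{lem:projectiveNakayama} then shows that $\mathcal{F}$ is an isomorphism on every box space. Hence $\mathcal{F}$ is a trivalent equivalence.

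\textbf{Main obstacle.} The substantive input is Theorem~\ref{thm:G2recognition} itself, which is assumed here. The remaining point needing care is the identification in Step 3 of $\mathcal{F}$ mod $h$ with the identity. This uses that both sides are generated by the same vertex and self-duality, with the rescaling by $e_1'^{-1}$ becoming trivial mod $h$. One must also confirm that the base change $G_2(R,x')/h$ is $G_2(\mathbb{C},x)$. This holds because the relations are polynomial in $x'$ and reduce to those with parameter $x$.
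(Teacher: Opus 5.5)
Your proposal is correct and follows the paper's proof essentially step for step. You lift the face-free bases of the $k$-box spaces ($k\le 5$) via Lemma~\ref{lem:projectiveNakayama}, apply Theorem~\ref{thm:G2recognition} with $x'=(e_1')^{-1}a'$ and the allowed rescaling by $(e_1')^{-1}\equiv 1 \pmod h$, and use Lemma~\ref{lem:projectiveNakayama} again to see that the resulting functor, which is the identity mod $h$, is an isomorphism on every box space. The paper leaves your extra details implicit (rotation invariance forcing the shape of the square and pentagon expansions, and identifying $\mathcal{F}$ mod $h$ with the identity), but one point in Step 1 is argued in the wrong direction, both in your proposal and in the paper. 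You need that for $k\le 5$ every non-elliptic diagram is face-free, not merely that the face-free diagrams are non-elliptic. This holds because a standard Euler characteristic count shows a non-elliptic diagram with an internal face has at least six boundary points, and it is what makes the face-free diagrams the whole basis rather than just part of it.
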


\begin{proof}
    Let $A=\mathbb{C}[h]/h^2$, and let
    $(\mathcal{C},X,s',\tau')$ be an infinitesimal deformation of
    $G_2(\mathbb{C},x)$. The non-elliptic diagrams form a basis for
    the box spaces of $G_2(\mathbb{C},x)$, and thus, by Lemma \ref{lem:projectiveNakayama},
    their lifts form an $A$-basis for the corresponding box spaces of
    $\mathcal{C}$.

    Let $a'$ and $e_1'$ be the corresponding coefficients for
    $\mathcal{C}$. Since $e_1'\equiv1\pmod h$, we have
    $e_1'\in A^\times$. Set
    \[
        x'=(e_1')^{-1}a'.
    \]
    Since $a'\equiv x\pmod h$ and $e_1'\equiv1\pmod h$, we have
    $x'\equiv x\pmod h$. Thus Theorem~\ref{thm:G2recognition} gives
    a trivalent pivotal functor
    \[
        G_2(A,x')
        \longrightarrow
        (\mathcal{C},X,(e_1')^{-1}s',(e_1')^{-1}\tau').
    \]
    Modulo $h$, this functor is an equivalence, so by Lemma \ref{lem:projectiveNakayama} it is an isomorphism on every box space, and hence it is a trivalent pivotal equivalence. Finally, since
    $(e_1')^{-1}\equiv1\pmod h$, its target is equivalent to
    $(\mathcal{C},X,s',\tau')$ as an infinitesimal deformation.
\end{proof}

\begin{remark}
    The same argument also shows that any order $n$ deformation is given by varying $x$, and taking inverse limits, any formal $1$-parameter family is also given by varying $x$.
\end{remark}

\begin{remark}
    We stated Theorem \ref{thm:G2recognition} for $G_2$ as a pivotal category, although the quantum group categories naturally carry ribbon structures when $q \neq \pm i$. The compatible braidings on the $G_2$ trivalent category were classified in \cite[\S8]{MR3624901}: they are the two standard braidings corresponding to $q$ and $q^{-1}$. The same calculation over $\mathbb{C}[h]/h^2$ shows that every ribbon infinitesimal deformation is obtained by varying $q$. Away from $q=\pm1$, this is equivalent to varying the pivotal parameter $x=q+q^{-1}$. At $q=\pm1$, the map $q\mapsto q+q^{-1}$ has vanishing derivative, so the first-order variation of $q$ is visible in the braiding but not in the underlying pivotal category. That any \emph{analytic} ribbon deformation of $G_2$ for $|q| \neq 1$ comes from varying $q$ should also follow from the main result of \cite{2510.09922}.
\end{remark}

\section{Some variations on Deligne's \texorpdfstring{$S_t$}{S t}} \label{sec:StDef}

Deligne introduced a category which interpolates the defining permutation $n$-dimensional representation of $S_n$. In this section we introduce a small variation on this category, following \cite[\S6.2.1]{MR5101335}, which interpolates between the irreducible $(n-1)$-dimensional representations. This is the category version of the quasi-partition algebras \cite{MR3177889,MR4756485}, whereas Deligne's construction is the category version of the partition algebra of Halverson-Ram \cite{MR2143201}. Because we need to work over a general ring $R$, we repeat some material from \cite{2007.11640,MR5101335}. A reader already familiar with this material can safely skip this section.

\begin{definition}
For $R$ a commutative ring and $a,b \in R$, let $\mathrm{Cob}(R,a,b)$ be the $R$-linear ribbon category generated by a trivalent vertex and a one-valent vertex modulo the following relations:

\begin{align*}
    \unknot\; &= ab  &\qquad
      \twist\; &=  \;\drawcup&\qquad
        \twistvertex\; &=  \;\threevertex
\end{align*}
\begin{align*}
    \loopvertex\;&= b \onevalent&
      \twogon\;&= \;b \; \onestrandid& \dogbone \;&= a &\\
	\leftunitvertex \; &= \; \vertonestrandid \; = \; \rightunitvertex & \braidcross \; &= \; \invbraidcross &\drawH \; &= \; \drawI \; \\
\end{align*}
\end{definition}

Note that the circle and lollipop relations follow from the other relations, as the lollipop is a $1$-valent vertex composed with a bigon and the circle is two $1$-valent vertices composed on each end of the bigon. The name $\mathrm{Cob}$ comes from the viewpoint of \cite{2007.11640}, where this category can also be thought of as the $2$-dimensional bordism category modulo the sphere equals $a$ and the handle morphism being $b$ times the cylinder.

\begin{remark}
    $\mathrm{Cob}(R,t,1)$ is equivalent to the category which Deligne calls $\mathrm{Rep}_1(S_t,R)$ \cite{MR2348906}. This is because both have the same universal property, namely that a functor out of the category is the same data as a separable Frobenius algebra object such that the composition of the unit and counit is multiplication by $t$. The additive and Karoubian completion of $\mathrm{Rep}_1(S_t,R)$ is what Deligne calls $\mathrm{Rep}(S_t,R)$. If $R = \mathbb{C}$ and $t \notin \mathbb{N}$, then $\mathrm{Rep}(S_t,R)$ is a semisimple abelian category, while if $t \in \mathbb{N}$ there is a more subtle construction of an abelian envelope $\mathrm{Rep}(S_t,R)^{\mathrm{ab}}$ \cite{MR2348906, MR2737787,1601.03426}, which we will not need in this paper.
\end{remark}

\begin{lemma}
    If $z \in R^\times$, then $\mathrm{Cob}(R,a, b)$ is ribbon equivalent to $\mathrm{Cob}(R,za, z^{-1}b)$.
\end{lemma}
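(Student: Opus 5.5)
We will construct an explicit rescaling functor, in the spirit of Definition \ref{def:rescaling} and Lemma \ref{lem:rescaling}, but now rescaling the one-valent vertex as well as the trivalent vertex. The guiding example is the Frobenius-algebra interpretation. The one-valent vertex (unit/counit) and the trivalent vertex (multiplication/comultiplication) can be rescaled so that the handle $b$ becomes $z^{-1}b$ and the sphere $ab$ is unchanged. The product $ab$ must be preserved, because the unknot is a closed diagram with no vertices and so no rescaling can change it.

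Concretely, I would define a functor $F$ from $\mathrm{Cob}(R,za,z^{-1}b)$ to $\mathrm{Cob}(R,a,b)$. It sends the strand to the strand, caps, cups and crossings to themselves, the trivalent vertex to $\lambda$ times the trivalent vertex, and the one-valent vertex to $\mu$ times the one-valent vertex. The scalars $\lambda,\mu\in R^\times$ are to be determined. Because each generator is sent to a scalar multiple of itself and the rotation and twist relations are homogeneous, the trivalent vertex stays rotationally invariant. For the same reason the twist, crossing-change and $H=I$ relations are preserved automatically. The remaining relations impose the following conditions:
\begin{itemize}
\item The unit relations (a one-valent vertex attached to a trivalent vertex equals the identity strand) require $\lambda\mu=1$.
\item The bigon relation requires $\lambda^2 b = z^{-1}b$; since $b$ need not be invertible, it is enough that $\lambda^2=z^{-1}$.
\item The dogbone relation requires $\mu^2 a = za$.
\item The unknot relation is automatic, since it has no vertices and $ab=(za)(z^{-1}b)$.
\end{itemize}

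The apparent obstacle is that $\lambda^2=z^{-1}$ needs a square root of $z$, which need not exist in $R$. The plan avoids this by changing the identification of the self-duality, as in Lemma \ref{lem:rescaling}, where the cap is rescaled by $u$ and the cup by $u^{-1}$. I would use the analogue of $F_{u,v}$ that also rescales the one-valent vertex, and rescale each diagram by a monomial in the numbers of caps and vertices. Take the cap scaled by $u$, the $\Hom(2,1)$ trivalent vertex by $v$, and the one-valent vertex $\Hom(1,0)$ by $w$. Rotating a vertex costs powers of $u$, so the relations give:
\begin{itemize}
\item the unit relations: $u^{-1}vw=1$;
\item the bigon, which is the $\Hom(2,1)$ vertex composed with the $\Hom(1,2)$ vertex (scaled by $u^{-1}v$): $u^{-1}v^2 b = z^{-1}b$;
\item the dogbone, which is a counit, a unit (scaled by $u^{-1}w$) and a strand: $u^{-1}w^2 a = za$.
\end{itemize}
The choice $u=z^{-1}$, $v=z^{-1}$, $w=1$ satisfies all three: $u^{-1}vw=z\cdot z^{-1}=1$, $u^{-1}v^2=z\cdot z^{-2}=z^{-1}$, and $u^{-1}w^2=z$. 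The unknot is one cap and one cup, so its scalar is $u\cdot u^{-1}=1$, which agrees with $ab$. The braiding relations are unchanged, because crossings are not rescaled.

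The hardest step is checking that this assignment is well defined on all diagrams. Since a diagram $D$ with $m$ inputs and $n$ outputs, $V_3$ trivalent vertices and $V_1$ one-valent vertices has $m+V_1+V_3 \equiv n \pmod 2$, the exponent of $u$ can be written intrinsically in terms of $m-n$, $V_3$ and $V_1$, exactly as in Definition \ref{def:rescaling}. With this formula, $F$ is a well-defined monoidal, pivotal and ribbon functor on the free category, and it is compatible with composition and tensor product. Since all the relations of $\mathrm{Cob}(R,za,z^{-1}b)$ are sent into the relations of $\mathrm{Cob}(R,a,b)$, $F$ descends to the quotient. The same recipe with $z^{-1}$ in place of $z$ gives an inverse functor, so $F$ is a ribbon equivalence.
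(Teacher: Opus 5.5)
Your proposal is correct and is essentially the paper's proof. The paper likewise defines a rescaling functor $D\mapsto z^{(n-m+T(D)-P(D))/2}D$ from $\mathrm{Cob}(R,a,b)$ to $\mathrm{Cob}(R,za,z^{-1}b)$, checks the defining relations, and obtains the inverse by replacing $z$ with $z^{-1}$. Your functor runs in the opposite direction and uses a different but equally valid choice of scalars, namely $u=v=z^{-1}$, $w=1$, i.e.\ $D\mapsto z^{(n-m-V_3+V_1)/2}D$ (the paper's inverse corresponds to $u=w=z$, $v=1$), and this changes nothing in the argument.
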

\begin{proof}
    The equivalence is given by the functor $F_z: \mathrm{Cob}(R,a, b) \rightarrow \mathrm{Cob}(R,za, z^{-1}b)$ which is given on diagrams $D \in \Hom(m,n)$ by $F_z(D) = z^{\frac{n-m+T(D)-P(D)}{2}} D$ where $T(D)$ is the number of trivalent vertices and $P(D)$ is the number of $1$-valent vertices (the quantity $n-m+T(D)-P(D)$ is always even, so this does not require a square root). This functor is well-defined by checking the defining relations, and is an equivalence because the inverse is given by $F_{z^{-1}}$.
\end{proof}

\begin{remark}
In particular, if $ab \neq 0$, we have that $\mathrm{Cob}(\mathbb{C},a,b)$ is also equivalent to Deligne's $\mathrm{Rep}_1(S_{ab},\mathbb{C})$. However, the categories $\mathrm{Cob}(\mathbb{C},1,0)$, $\mathrm{Cob}(\mathbb{C},0,1)$, and $\mathrm{Cob}(\mathbb{C},0,0)$ are not equivalent to each other, and only $\mathrm{Cob}(\mathbb{C},0,1)$ is equivalent to Deligne's $\mathrm{Rep}_1(S_0,\mathbb{C})$. In particular, the moduli space of equivalence classes of $\mathrm{Cob}(\mathbb{C},a,b)$ up to equivalence is a non-Hausdorff space that is a line with the origin tripled. The semisimplification of $\mathrm{Cob}(\mathbb{C},1,0)$ is studied in \cite[\S 5]{MR4450143}.
\end{remark}

\begin{definition}
     For each partition $\pi$ of $\{1, \ldots n\}$, we define a diagram $B_\pi$ in $\mathrm{Cob}(R,a, b)$ with $n$ boundary points as follows. Connect each part of the partition using a left-associated trivalent tree, and have the trees coming from lexicographically earlier parts pass over lexicographically later ones.
 \end{definition}

\begin{example}
    If $\pi$ is the partition
    $\{1,2,4,6\}\bigsqcup\{3,5\}\bigsqcup\{7\}$,
    then the diagram $B_\pi$ is
    \[
    \begin{tikzpicture}[
        baseline=-0.1cm,
        line cap=round,
        line join=round
    ]
        \foreach \i in {1,...,7} {
            \coordinate (p\i) at ({0.8*(\i-1)},0);
            \node[below=2pt] at (p\i) {\scriptsize $\i$};
        }

        \draw[line width=0.7pt]
            (p3) .. controls (1.6,0.8) and (3.2,0.8) .. (p5);

        \coordinate (vone) at (0.4,0.5);
        \coordinate (vtwo) at (2.1,1.1);

        \draw[white,line width=3pt]
            (p1) -- (vone)
            (p2) -- (vone)
            (vone) -- (vtwo)
            (p4) -- (vtwo)
            (p6) -- (vtwo);

        \draw[black,line width=0.7pt]
            (p1) -- (vone)
            (p2) -- (vone)
            (vone) -- (vtwo)
            (p4) -- (vtwo)
            (p6) -- (vtwo);

        \coordinate (vthree) at (4.8,0.6);
        \draw[line width=0.7pt] (p7) -- (vthree);
        \fill (vthree) circle (1.3pt);
    \end{tikzpicture}.
    \]
\end{example}

The following lemma is immediate from the cobordism viewpoint.
  
\begin{lemma}
     The $\{B_\pi\}$ are an $R$-basis for $\mathrm{Cob}(R,a,b)$.
 \end{lemma}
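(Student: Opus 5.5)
To show that $\{B_\pi\}$, as $\pi$ ranges over partitions of $\{1,\dots,n\}$, is an $R$-basis of the $n$-box space of $\mathrm{Cob}(R,a,b)$, I would prove spanning and linear independence separately.

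\emph{Spanning.} Start from an arbitrary diagram $D$ with $n$ boundary points and reduce it to a multiple of some $B_\pi$ with coefficient of the form $a^i b^j$. The relations allow us to reorganize each connected component of $D$ freely.
\begin{itemize}
\item Crossings between different components can be changed, by the crossing relation $\braidcross=\invbraidcross$.
\item Twists can be removed using the twist relations.
\item The $H=I$ relation and the rotation invariance of the vertex show that any trivalent tree on a fixed set of boundary points equals any other such tree, for instance the left-associated one.
\item Each internal loop in a component can be reduced by the bigon relation, at the cost of a factor of $b$. In cobordism language, genus is replaced by powers of $b$.
\item Internal $1$-valent vertices are absorbed by the unit relations.
\item A component with no boundary points becomes a dogbone (giving $a$) times powers of $b$, or a circle (giving $ab$).
\end{itemize}
Formally, I would induct on the number of vertices plus crossings. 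The output is a product of scalars times $B_\pi$, where $\pi$ is the partition of boundary points into connected components. Singletons appear as a $1$-valent vertex on a strand, which is allowed in $B_\pi$.

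\emph{Linear independence.} This is the main obstacle, because $R$ is an arbitrary commutative ring and we cannot use dimension counting over a field. My plan is to construct a ``linear evaluation'' functor. Define $\mathcal{P}$ to be the $R$-linear category whose morphisms $m\to n$ are free $R$-modules on partitions of $m+n$ points. Composition glues partitions, and each component that becomes closed contributes a scalar. A closed component arising with $g$ ``handles'' and $k$ pieces contributes $a\,b^{g}$-type scalars, exactly as in the $2$-dimensional bordism picture of \cite{2007.11640}. To do this bookkeeping correctly I would record, for each gluing, the first Betti number of the glued graph.

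Next I would check three things:
\begin{itemize}
\item $\mathcal{P}$ is ribbon, with trivial braiding given by permuting points.
\item It carries a trivalent vertex (the partition with a single block of size $3$) and a univalent vertex (a single block of size $1$).
\item It satisfies all the defining relations of $\mathrm{Cob}(R,a,b)$.
\end{itemize}
The universal property of the presented category then gives a functor $\mathrm{Cob}(R,a,b)\to\mathcal{P}$ sending $B_\pi$ to the basis element $\pi$. Linear independence follows immediately.

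The delicate point is verifying that the scalar rule in $\mathcal{P}$ is associative, meaning it is independent of the order of gluing. I would prove this by identifying the scalar as $a^{c}b^{\beta}$. Here $c$ is the number of closed components and $\beta$ is the total first Betti number (genus) acquired, after accounting for the relation dogbone $=a$ and bigon $=b$. Both quantities are additive under gluing, which is equivalent to using the classification of surfaces in the bordism model.
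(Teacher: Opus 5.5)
Your proposal is correct. It is an explicit, hands-on version of what the paper compresses into the single sentence ``immediate from the cobordism viewpoint.''

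\textbf{The paper's route.} The paper identifies $\mathrm{Cob}(R,a,b)$ with the linearized $2$-dimensional bordism category modulo sphere $=a$ and handle $=b\cdot$cylinder, following \cite{2007.11640}. This identification uses the fact that $2$-dimensional bordisms form the free symmetric monoidal category on a commutative Frobenius algebra. The classification of surfaces then says a bordism is determined by two things: the partition of its boundary into connected components, and the genus of each component. The two relations strip off the genera and the closed components, leaving exactly the partitions.

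\textbf{Your route.} You prove spanning by direct diagrammatic reduction. You prove independence by writing the quotient down explicitly as a partition category $\mathcal{P}$ with composition weight $a^{c}b^{\beta_1}$, and mapping to it via the universal property of the presentation.

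\textbf{Comparison.} Your route gives a self-contained argument over an arbitrary commutative ring. It needs neither the Frobenius-algebra/TQFT identification nor the classification of surfaces. Associativity of $\mathcal{P}$ reduces to a single graph-theoretic fact: contracting a connected subgraph $H$ lowers $\beta_1$ by exactly $\beta_1(H)$, and closed components simply add. The paper's route is shorter and makes the answer conceptually transparent, but it hides the real work inside the identification with bordisms.

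\textbf{Points to make precise when writing it up.}
\begin{enumerate}
\item The ``glued graph'' should have the blocks of both partitions as vertices and the middle points as edges.
\item $\beta$ must count cycles in every component, including those that still reach the boundary. Your first description mentions a scalar only for components that become closed. Without the open components, the bigon relation (comultiplication followed by multiplication must give $b\cdot\mathrm{id}$) and the lollipop relation fail in $\mathcal{P}$. Your later phrase ``total first Betti number'' is the right formula.
\item In the spanning step, $H=I$ moves and crossing changes do not lower the number of vertices or crossings, so the induction measure you stated does not literally decrease. Induct on the number of trivalent vertices instead. Within a component containing a cycle, use $H=I$ moves to shrink the cycle to a bigon or lollipop, then apply those relations. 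Handle tree components with the unit relations and the normalization of trees.
\end{enumerate}
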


\begin{remark}
    The basis that Deligne uses for $\mathrm{Rep}_1(S_{ab},R)$ is also indexed by partitions, but is \emph{not} the basis corresponding to $\{B_\pi\}$. This is most naturally seen from the Harman-Snowden description \cite{2204.04526}, where the diagrammatic basis consists of the Schwartz functions on $\mathbb{N}^n$ which are $1$ on exactly the tuples where $x_i = x_j$ when $i$ and $j$ are in the same part (with no condition at all on distinct parts), while Deligne's basis is the functions which are $1$ on exactly the tuples where $x_i = x_j$ if $i$ and $j$ are in the same part and $x_i \neq x_j$ if they are in different parts. The change of basis matrix between these two bases is triangular with $1$'s on the diagonal with respect to the refinement partial ordering.
\end{remark}

If $a \in R^\times$, then we have an idempotent
\[P = \vertonestrandid \;- \frac{1}{a} \counitunit.\]
This gives an object $X$ in the idempotent completion $\mathrm{Kar}(\mathrm{Cob}(R,a,b))$. The strand in $\mathrm{Cob}(R,a,b)$ is equivalent to $\mathbf{1} \oplus X$.

\begin{definition}
    If $a \in R^\times$, let $\mathrm{Cob}'(R,a,b)$ denote the full subcategory of $\mathrm{Kar}(\mathrm{Cob}(R,a,b))$ whose objects are the tensor powers of $X$.
\end{definition}

Any diagram in $\mathrm{Cob}(R,a,b)$ can be restricted to $\mathrm{Cob}'(R,a,b)$ by pre and post composing with $P$, and we use the notation $\mathrm{Res}(D)$ for restriction. However be warned that \emph{restriction is not functorial}, because the composition of restrictions inserts a $P$ on every edge while the restriction of the composition only inserts them on boundary edges.

We have that $\mathrm{Cob}'(R,a,b)$ has a self-duality given by the identity, and a trivalent vertex given by the restriction of the trivalent vertex. We aim to give a direct diagram description of $\mathrm{Cob}'(R,a,b)$ in terms of this trivalent vertex.

\begin{definition}
If $a \in R^\times$, let $\mathcal{S}(R,a,b)$ be the $R$-linear ribbon category given by the following relations.
\begin{align*}
    \unknot\; &= ab-1 &\qquad
      \twist\; &=  \;\drawcup&\qquad
        \twistvertex\; &=  \;\threevertex\\[5pt] 
    \loopvertex\;&=0&
      \twogon\;&= \frac{ab-2}{a} \;\onestrandid & \braidcross &= \invbraidcross
\end{align*}
\vspace{5pt}
\[ \qquad  \drawH \; - \; \drawI \; +  \; \frac{1}{a}\left(\;\twostrandid \; -  \; \cupcap \; \right) = 0.\]
\end{definition}

\begin{lemma} \label{lem:bigoncheck}
    There's a functor $\mathcal{S}(R,a,b) \rightarrow \mathrm{Cob}'(R,a,b)$.
\end{lemma}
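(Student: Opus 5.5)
We use the universal property of the presented category $\mathcal{S}(R,a,b)$. To give a ribbon functor out of it, it suffices to give a ribbon category with an object carrying a symmetric self-duality and a rotationally invariant trivalent vertex satisfying the defining relations. So we take the object $X$ in $\mathrm{Cob}'(R,a,b)$ with self-duality the identity (restricted cap and cup), and the vertex $\mathrm{Res}(\threevertex)$. The proof then reduces to verifying each relation in $\mathrm{Cob}'(R,a,b)$.

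Each such verification is a computation in $\mathrm{Cob}(R,a,b)$. A diagram built from restricted generators equals the underlying $\mathrm{Cob}$ diagram with a copy of $P=\mathrm{id}-\tfrac1a\counitunit$ inserted on every edge. We expand each $P$ into its two terms. Every edge carrying the $\counitunit$ term is cut into two $1$-valent vertices. These are then simplified using the unit relations, the lollipop and dogbone relations, and $\unknot=ab$.

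The small relations go as follows.
\begin{itemize}
\item \emph{Symmetry and ribbon relations.} Rotational invariance of the vertex, the twist relations, and $\braidcross=\invbraidcross$ are inherited from $\mathrm{Cob}$, because $P$ is natural with respect to the braiding and the twist.
\item \emph{Circle.} This is the trace of $P$, namely $ab-\tfrac1a\cdot a=ab-1$.
\item \emph{Lollipop.} A $1$-valent cap on the vertex gives the identity edge, which is then hit by $P$. Using the loop relation, the inner-edge expansion gives $P(b\,\onevalent - \tfrac1a\cdot a\,\onevalent)$ up to the relevant terms, and $P$ kills $\onevalent$. So the result is $0$.
\item \emph{Bigon.} Expanding the two internal $P$'s gives four terms:
\begin{itemize}
\item $b\cdot\mathrm{id}$ from the plain bigon;
\item twice $-\tfrac1a\,\mathrm{id}$, when one internal edge is cut (the unit relations straighten it);
\item $\tfrac1{a^2}$ times a $\counitunit$ term, when both are cut.
\end{itemize}
The last term is killed by the outer $P$. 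This gives $(b-\tfrac2a)=\tfrac{ab-2}{a}$, as required.
\end{itemize}

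The main step is the $I=H$ relation. We compute $\mathrm{Res}(\drawH)$ with $P$ on its internal edge, which gives
\[
\mathrm{Res}(\drawH_{\mathrm{Cob}})-\tfrac1a\mathrm{Res}(\text{internal edge cut}).
\]
The cut diagram, after the unit relations, is $\twostrandid$. Similarly
\[
\mathrm{Res}(\drawI)=\mathrm{Res}(\drawI_{\mathrm{Cob}})-\tfrac1a\,\cupcap.
\]
In $\mathrm{Cob}$ we have $\drawH=\drawI$, so the difference is $-\tfrac1a(\twostrandid-\cupcap)$, restricted. This is exactly the stated relation. The only care needed is the orientation convention, i.e.\ which of $\twostrandid$ or $\cupcap$ arises from cutting which edge, and this fixes the signs.

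I expect this $H/I$ sign bookkeeping, together with tracking exactly which $P$'s are present when restriction is not functorial, to be the only real obstacle. Once all relations are checked, the universal property gives the functor.
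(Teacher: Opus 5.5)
Your proposal is correct and follows the paper's route. Like the paper, you invoke the universal property of the presented category and check each defining relation in $\mathrm{Cob}(R,a,b)$ with $P$ inserted on every edge; the paper writes out only the bigon, using the same four-term expansion you give, and says the other relations are similar, while you also check the circle, lollipop and $H$--$I$ relations correctly. There is one harmless slip in the lollipop: cutting the loop and applying the unit relations gives $-\tfrac{1}{a}$ times the one-valent vertex, with no extra factor of $a$, and the conclusion is unaffected because the outer $P$ kills the one-valent vertex either way.
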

\begin{proof}
    We need to check that all the defining identities hold in $\mathrm{Cob}'(R,a,b)$ by interpreting them in $\mathrm{Cob}(R,a,b)$ by inserting a projection $P$ at every edge. We check the bigon relation, the others are similar. 

\[\PBigon{1}{1}
=
\PBigon{0}{0}
-\frac{1}{a} \; \PBigon{2}{0}
-\frac{1}{a} \; \PBigon{0}{2}
+\frac{1}{a^2} \; \PBigon{2}{2}
= \left(b-\frac{1}{a} -\frac{1}{a} + 0 \right) \; \PIdentity =
\frac{ab-2}{a} \; \PIdentity.\]

\end{proof}

\begin{proposition} \label{prop:StVersions}
The functor $\mathcal{S}(R,a,b) \rightarrow \mathrm{Cob}'(R,a,b)$ is an equivalence.
\end{proposition}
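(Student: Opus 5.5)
The functor is bijective on objects, since both categories have objects indexed by $n \in \mathbb{N}$, with $n$ sent to $X^{\otimes n}$. It therefore suffices to show that it is an isomorphism on each $n$-box space. I will prove fullness and faithfulness separately. For faithfulness I will compare with a spanning set whose size matches the rank of $\mathrm{Cob}'(R,a,b)^n$, and then use a rank argument over the ring $R$.

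\emph{Step 1: ranks of the target.} Since $P$ is an idempotent, $\mathrm{Cob}'(R,a,b)^n = P^{\otimes n}\,\mathrm{Cob}(R,a,b)^n$ is a direct summand of the free module with basis $\{B_\pi\}$. I claim it is free with basis the restrictions $\mathrm{Res}(B_\pi)$, where $\pi$ runs over partitions of $\{1,\dots,n\}$ with no singleton parts. To see this, note that attaching the one-valent vertex to boundary point $i$ of $B_\pi$ and expanding $P$ gives $\mathrm{Res}(B_\pi) = B_\pi + (\text{terms } \tfrac{\pm1}{a^k} B_{\pi'})$, where each $\pi'$ is obtained by splitting off some points as singletons. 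Using the unit relation, a diagram $B_\sigma$ in which point $i$ is a singleton is killed by $P$ at position $i$, since $P\circ(\text{one-valent vertex}) = (1 - \tfrac{1}{a}\cdot a)(\text{one-valent vertex}) = 0$. A triangularity argument with respect to refinement then shows that $\{\mathrm{Res}(B_\pi) : \pi \text{ singleton-free}\}$ is a basis of the summand. As a consequence, the rank of $\mathrm{Cob}'(R,a,b)^n$ is the number of singleton-free set partitions of $\{1,\dots,n\}$.

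\emph{Step 2: fullness.} Every morphism of $\mathrm{Cob}'$ is a restriction of some $B_\pi$, so it suffices to show that each singleton-free $\mathrm{Res}(B_\pi)$ is in the image. Let $\tilde B_\pi$ be the same left-associated trees with over-crossings, drawn in $\mathcal{S}(R,a,b)$. Its image is $\mathrm{Res}$ applied to $B_\pi$ with $P$ inserted on every internal edge. Expanding each internal $P = \mathrm{id} - \tfrac1a(\text{counit}\circ\text{unit})$, the leading term is $\mathrm{Res}(B_\pi)$. The remaining terms cut internal edges, and after the unit relation they become restrictions of $B_{\pi'}$ with $\pi'$ strictly finer than $\pi$, or zero if a singleton appears. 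Induction on refinement then gives fullness.

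\emph{Step 3: faithfulness.} I will show that the $\tilde B_\pi$ for singleton-free $\pi$ span $\mathcal{S}(R,a,b)^n$. Any diagram in $\mathcal{S}$ is first reduced as follows:
\begin{itemize}
\item crossings change freely;
\item the twist relations remove kinks;
\item the $I{=}H$ relation, up to lower terms (identity or cup-cap), lets us reassociate trees and move edges;
\item loops, bigons and lollipops evaluate to scalars or zero.
\end{itemize}
The argument is an induction on the number of vertices plus the first Betti number of the underlying graph. Any cycle in a connected component can be shortened using the $I{=}H$ relation until it becomes a bigon or a loop, which is then removed. The correction terms $\tfrac1a(\mathrm{id} - \text{cupcap})$ have fewer vertices. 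Closed components evaluate to scalars. Once the diagram is a forest, each component tree can be rewritten into the left-associated form with the standard over/under convention, again modulo fewer-vertex terms. Singleton components are impossible, because the strands have no one-valent vertices and a lone strand has two endpoints.

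Hence $\mathcal{S}^n$ is spanned by as many elements as the rank of the free module $\mathrm{Cob}'^n$. The functor is surjective by Step 2. A surjection from a module generated by $N$ elements onto a free module of rank $N$ over a commutative ring is an isomorphism: it splits, and the kernel is then a summand of a module that after splitting is free of rank $N$, and determinant or rank considerations force it to be zero. So the functor is an isomorphism on every box space, and hence an equivalence.

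The main obstacle is the spanning argument in Step 3. One has to check carefully that the reduction terminates, and in particular that the cycle-shortening via the $I{=}H$ relation always decreases the chosen complexity, with the non-planar ribbon setting handled by the crossing relations. If this gets messy, I would instead construct an explicit inverse functor $\mathrm{Cob}' \to \mathcal{S}$ by sending $\mathrm{Res}(B_\pi)$ to the Step 2 triangular combination of the $\tilde B_{\pi'}$. That functor must then be shown to be an inverse, which again needs the spanning statement, so Step 3 seems unavoidable.
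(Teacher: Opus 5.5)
Your proposal is correct and follows essentially the paper's route. The paper likewise shows that the singleton-free $D_\pi$ (your $\tilde B_\pi$) span $\mathcal{S}(R,a,b)$, that the $\mathrm{Res}(B_\pi)$ form a basis of $\mathrm{Cob}'(R,a,b)$ by an echelon argument, and that the images $D'_\pi$ are unitriangular in the $\mathrm{Res}(B_\sigma)$ with respect to refinement.

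There are two cosmetic differences. First, your Step 2 already shows the spanning set maps onto a basis, so injectivity is immediate and the rank lemma for surjections onto free modules is unnecessary. Second, in Step 3 your complexity measure should be lexicographic with cycle length as a secondary parameter, since an $I{=}H$ move preserves both the vertex count and the Betti number.
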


We show this by exhibiting a spanning set for $\mathcal{S}(R,a,b)$ whose image in $\mathrm{Cob}'(R,a,b)$ is a basis.

\begin{definition}
    For each partition $\pi$ of $\{1, \ldots n\}$ with no singletons, we define a diagram $D_\pi$ in $\mathcal{S}(R,a,b)$ with $n$ boundary points as follows. Connect each part of the partition using a left-associated trivalent tree, and have the trees coming from lexicographically earlier parts pass over lexicographically later ones. We let $D'_\pi$ denote the image of $D_\pi$ in $\mathrm{Cob}'(R,a,b)$.
\end{definition}

\begin{lemma}
     The diagrams $\{D_\pi\}$ where $\pi$ is a partition with no singletons span $\mathcal{S}(R,a,b)$.
\end{lemma}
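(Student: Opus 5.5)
We will argue by a reduction (confluence-style) argument: every diagram in $\mathsf{RibTri}_R$ with $n$ boundary points is, modulo the defining relations of $\mathcal{S}(R,a,b)$, an $R$-linear combination of the $D_\pi$ with $\pi$ singleton-free. We induct on the pair (number of trivalent vertices, number of crossings), ordered lexicographically. The first step is to remove all closed components and all crossings that do not matter. Since the braiding is symmetric (the crossing equals its inverse), the twist is trivial, and the vertex is invariant under twisting, the $n$-box space is a quotient of the space of ``abstract'' trivalent graphs with $n$ labelled boundary points. In such a graph, crossings can be freely changed and strands passed through one another. Consequently any diagram equals, up to these moves, a disjoint union of connected trivalent graphs, each attached to some subset of the boundary points. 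Closed components are then removed as follows: a loop is the scalar $ab-1$; a connected closed component with vertices is handled by the same reductions below, which reduce vertex count; and the empty diagram is the identity.

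The key step is to show that any connected component containing a cycle can be reduced. We apply the $I=H$ relation, which reads $\drawH = \drawI - \frac{1}{a}(\twostrandid - \cupcap)$, to an internal edge. The correction terms have two fewer vertices, and $\drawI$ is the flip. Using the flip together with the symmetric braiding, we can slide edges around a cycle so as to shrink its length. Once the cycle is a bigon or a loop at a vertex, we apply the bigon relation $\frac{ab-2}{a}\,\mathrm{id}$ or the lollipop relation $0$, both of which drop the vertex count. Thus each connected component is a linear combination of trees (plus terms with fewer vertices, handled by induction). A tree component attached to a boundary set $S$ has $|S|\ge 2$ unless it is a single strand ending nowhere. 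Note that a component with one boundary point and a vertex would contain a lollipop or a cycle, since trees with one leaf and only trivalent vertices do not exist, so the only possibility is the strand itself. Since there are no univalent vertices in $\mathcal{S}$, singleton parts never arise. Finally, the $I=H$ relation (modulo lower terms) together with the symmetric braiding lets us rearrange any tree on $S$ into the left-associated tree. Moreover the symmetric braiding lets us put components in the prescribed over/under order. This yields $D_\pi$ plus lower-order terms.

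The main obstacle is organizing the cycle-shrinking step so that it genuinely terminates. Flipping an edge of a cycle of length $k\ge 3$ via $\drawI$ produces a cycle of length $k-1$ with an extra leg, while keeping the vertex count fixed and possibly introducing crossings. We will therefore induct first on vertex count, then on the length of the shortest cycle. Crossings need no separate control, because in the symmetric setting they can be eliminated at will up to isotopy of the abstract graph, so they never affect the induction.
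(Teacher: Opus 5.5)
Your proposal is correct and follows essentially the same route as the paper. Both reduce to forests by induction on the number of vertices and the size of the smallest cycle (the paper says ``smallest face''), using the $I=H$ flip to shrink a cycle until the bigon, lollipop, or circle relation applies, then use the last relation with induction on vertices to make each tree left-associated, and finally use the symmetric crossing to fix the over/under order.

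The only blemish is presentational: your first paragraph announces an induction on (vertices, crossings), which you then replace by (vertices, shortest cycle length). The latter is the induction that actually works.
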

\begin{proof}
    By induction on the number of faces and the size of the smallest face we see that $\mathcal{S}(R,a,b)$ is spanned by forests. Using the last relation, and induction on the number of vertices, it is spanned by left-associated forests. Using the symmetry of the crossing, we may then arrange that the tree associated to a lexicographically earlier part passes over the trees associated to later parts.
\end{proof}

\begin{lemma}
    The restrictions $\mathrm{Res}(B_\pi)$ for $\pi$ ranging over partitions with no singletons form a basis of $\mathrm{Cob}'(R,a,b)$.
\end{lemma}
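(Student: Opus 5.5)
Since $P$ is an idempotent, $\mathrm{Hom}_{\mathrm{Cob}'}(X^{\otimes m},X^{\otimes n})$ is the image of the projection $f\mapsto P^{\otimes n}\circ f\circ P^{\otimes m}$ acting on $\mathrm{Hom}_{\mathrm{Cob}}(m,n)$. Using pivotality we may work with $n$-box spaces, where this becomes the projection $\mathrm{Res}$ given by inserting $P$ on every boundary strand. My plan is to show two things. First, the $\mathrm{Res}(B_\pi)$ with $\pi$ singleton-free span this image. Second, they are linearly independent. For independence I will exhibit a triangular relation with a complementary family.

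The key computation is what happens when $P$ meets the leg of a tree in $B_\pi$. Expand
\[P^{\otimes n}=\sum_{S\subseteq\{1,\dots,n\}}(-a^{-1})^{|S|}\,(\text{counit-unit on the strands in }S).\]
Capping a strand with the counit (one-valent vertex) and then reattaching the unit at the boundary modifies $B_\pi$ as follows. The boundary point $i$ is detached from its part and becomes a singleton. The leftover one-valent vertex on the old tree is absorbed by the unit relation ($\leftunitvertex=\vertonestrandid$) when the part has size at least $2$. If the part was already the singleton $\{i\}$, we instead get the dogbone, giving the scalar $a$. Using the crossing relations, the result is $B_{\pi_S}$ up to isotopy, where $\pi_S$ detaches the points of $S$ into singletons, times a power of $a$. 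Hence
\[\mathrm{Res}(B_\pi)=B_\pi+\sum_{\sigma}c_{\sigma}B_\sigma,\]
where every $\sigma$ in the sum is a strictly finer partition obtained by splitting off singletons.

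\emph{Independence.} Order partitions by refinement and restrict to singleton-free $\pi$. Since the $B_\sigma$ form a basis of $\mathrm{Cob}$ and $\pi$ occurs in $\mathrm{Res}(B_\pi)$ with coefficient $1$, while every other term is strictly finer than $\pi$, the $\mathrm{Res}(B_\pi)$ are linearly independent. No invertibility beyond $a\in R^\times$ is needed.

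\emph{Spanning.} The image of $\mathrm{Res}$ is spanned by the $\mathrm{Res}(B_\sigma)$ for all $\sigma$. It therefore suffices to show that $\mathrm{Res}(B_\sigma)=0$ whenever $\sigma$ has a singleton $\{i\}$. In that case the strand $i$ ends in a one-valent vertex, and
\[P\circ(\text{unit})=\text{unit}-a^{-1}(\text{unit})(\text{counit}\circ\text{unit})=\text{unit}-a^{-1}\,a\,\text{unit}=0,\]
using the dogbone relation. So the singleton-free restrictions span.

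\emph{Freeness.} As an extra check, the image of an idempotent on a free module is projective. Our spanning independent set shows this image is in fact free with the stated basis.

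The main obstacle is the bookkeeping in the triangularity claim. One has to verify that detaching points really yields exactly a $B_{\pi_S}$, with the left-associated tree shape and the over/under conventions for crossings, and not merely something equivalent to it. I would handle this by noting that $\mathrm{Cob}$ has symmetric crossings (the braiding relation) and the $H=I$ relation. These imply that any planar or tangled forest realizing a given partition equals $B_\sigma$ times a scalar, with no lollipops or bigons appearing.
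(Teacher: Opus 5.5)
Your proposal is correct and follows the paper's argument. Spanning holds because restrictions of partitions with a singleton vanish (your computation $P\circ\text{unit}=0$ via the dogbone), and independence holds because expanding the boundary projections gives $\mathrm{Res}(B_\pi)=B_\pi+\sum_\sigma c_{\pi,\sigma}B_\sigma$ with every $\sigma$ containing a singleton, so the expansions are unitriangular. The paper orders singleton-free partitions first instead of using refinement, which is slightly less bookkeeping. One small correction: for a part of size $2$ no unit relation is used, since cutting one end just leaves the other endpoint as a singleton, which is still a strictly finer partition.
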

\begin{proof}
    Since the $\{B_\pi\}$ for $\pi$ a partition (perhaps with singletons) are a basis of $\mathrm{Cob}(R,a,b)$, their restrictions to $\mathrm{Cob}'(R,a,b)$ span $\mathrm{Cob}'(R,a,b)$. The restriction of a partition with a singleton is zero, so the $\{\mathrm{Res}(B_\pi)\}$ for $\pi$ a partition with no singletons still spans $\mathrm{Cob}'(R,a,b)$. 
    
    We now show that the $\{\mathrm{Res}(B_\pi)\}$ for $\pi$ a partition with no singletons are linearly independent. Expanding each $P$ on the boundary
     \[
    \mathrm{Res}(B_\pi)
      = B_\pi +
        \sum_{\sigma}
        c_{\pi,\sigma} B_\sigma,
    \]
    where the sum is taken over partitions with at least one singleton. Ordering the partition basis so that the singleton-free partitions come first, these expansions are in echelon form with pivots $B_\pi$. Thus the elements $\mathrm{Res}(B_\pi)$ are linearly independent.
\end{proof}

\begin{lemma} \label{lem:reducedpartitionbasis}
     $\{D'_\pi\}$ where $\pi$ ranges over partitions with no singletons is a basis of $\mathrm{Cob}'(R,a,b)$.
\end{lemma}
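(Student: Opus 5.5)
We compare the family $\{D'_\pi\}$ with the basis $\{\mathrm{Res}(B_\pi)\}$ from the previous lemma, both indexed by partitions $\pi$ of $\{1,\ldots,n\}$ without singletons. The goal is to show that the change of basis matrix is unitriangular with respect to the refinement order (coarser partitions above finer ones, or vice versa). Unitriangularity gives invertibility over an arbitrary commutative ring $R$, so $\{D'_\pi\}$ is also a basis.

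The computation goes as follows. The morphism $D'_\pi$ is obtained from the diagram $B_\pi$ in $\mathrm{Cob}(R,a,b)$ by inserting the idempotent $P$ on \emph{every} edge: boundary edges and internal edges of the trees. On boundary edges this is exactly $\mathrm{Res}$, so
\[
D'_\pi=\mathrm{Res}(\widetilde{B}_\pi),
\]
where $\widetilde{B}_\pi$ denotes $B_\pi$ with $P$ inserted on each internal edge. Write $P=\mathrm{id}-\frac1a(\text{counit}\circ\text{unit})$ on each internal edge and expand.

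The term using $\mathrm{id}$ on every internal edge gives $\mathrm{Res}(B_\pi)$. Any other term cuts some internal edge of a tree into two pieces, each capped by a $1$-valent vertex. By the unit relation, a $1$-valent vertex attached to a trivalent vertex just becomes a strand. Hence the cut tree becomes a union of smaller left-associated trees (up to the symmetry $\drawH=\drawI$ in $\mathrm{Cob}$, together with the crossing relations used to restore the lexicographic over/under convention). The result is a scalar multiple of $B_\sigma$ for some $\sigma$ strictly finer than $\pi$. If such a $\sigma$ contains a singleton, then $\mathrm{Res}(B_\sigma)=0$, since $P$ kills the $1$-valent vertex at the boundary. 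Therefore
\[
D'_\pi=\mathrm{Res}(B_\pi)+\sum_{\sigma<\pi,\ \sigma\text{ singleton-free}} c_{\pi\sigma}\,\mathrm{Res}(B_\sigma),
\]
which is the desired unitriangular expansion.

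The main obstacle is the bookkeeping in the middle step. One must check that cutting an internal edge of a left-associated tree, after simplifying with the Frobenius and unit relations, really yields exactly $B_\sigma$ up to scalar. In particular this needs the following facts in $\mathrm{Cob}$:
\begin{itemize}
\item every tree with given leaves equals the left-associated one;
\item crossings between distinct components can be rearranged freely, using the crossing symmetry and the fact that everything is cobordism-determined.
\end{itemize}
I would justify both via the cobordism viewpoint: $B_\sigma$ depends only on the partition, being a disjoint union of genus-zero components. This viewpoint makes the claim immediate.
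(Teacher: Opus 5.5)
Your proposal is correct and is essentially the paper's proof. In both, expanding $P$ on the internal edges of the trees gives $D'_\pi=\mathrm{Res}(B_\pi)+\sum_{\sigma<\pi}c_{\pi,\sigma}\mathrm{Res}(B_\sigma)$, summed over singleton-free strict refinements $\sigma$ of $\pi$, and unitriangularity then yields the basis. The only difference is that you spell out bookkeeping the paper leaves implicit: the cut pieces are again left-associated trees, crossings can be rearranged because $\mathrm{Cob}$ is symmetric, and terms with singletons vanish under $\mathrm{Res}$.
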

\begin{proof}
    The $D'_\pi$ are given by inserting a $P$ at every edge including internal edges, so expanding every internal edge, while the $\mathrm{Res}(B_\pi)$ are given by inserting $P$ only at external edges. Expanding the definition of $P$ at every internal edge, we see that 
    \[
    D'_\pi
      = \mathrm{Res}(B_\pi) +
        \sum_{\sigma < \pi}
        c_{\pi,\sigma}\mathrm{Res}(B_\sigma),
    \]
    where the sum is taken over partitions $\sigma$ with no singletons which are strict refinements of $\pi$, thus the $\{D'_\pi\}$ form a basis.
\end{proof}

Since the spanning set $\{D_\pi\}$ of $\mathcal{S}(R,a,b)$ has image $\{D'_\pi\}$ in $\mathrm{Cob}'(R,a,b)$ which is a basis, Proposition \ref{prop:StVersions} is proven.

\begin{corollary}
    $\mathcal{S}(R,a,b) \cong \mathrm{Cob}'(R,a,b)$ is a finite free trivalent ribbon category.
\end{corollary}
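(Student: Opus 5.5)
The statement has two parts: $\mathcal{S}(R,a,b)$ is finite free, and it is a trivalent ribbon category. Both follow from Proposition \ref{prop:StVersions}, and the plan is to read off each axiom in turn.

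\emph{Finite freeness.} Since $\mathcal{S}(R,a,b) \to \mathrm{Cob}'(R,a,b)$ is an equivalence (in fact an isomorphism on each $n$-box space), it suffices to prove freeness for $\mathrm{Cob}'(R,a,b)$. By Lemma \ref{lem:reducedpartitionbasis}, the $n$-box space of $\mathrm{Cob}'(R,a,b)$ has the $R$-basis $\{D'_\pi\}$, indexed by the finitely many singleton-free partitions of $\{1,\dots,n\}$. Every Hom space $\Hom(X^{\otimes m},X^{\otimes n})$ is identified with an $(m+n)$-box space by pivotality. Hence every Hom space is finitely generated and free.

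\emph{Trivalent axioms.} Transporting the basis back, the $D_\pi$ form a basis of $\mathcal{S}(R,a,b)$: they span by the spanning lemma, and their images are independent. We then check the axioms against this basis.
\begin{itemize}
  \item There is one singleton-free partition for $k=0$ (the empty partition), none for $k=1$, and one each for $k=2,3$. So the ranks are $1,0,1,1$, with bases the empty diagram, the strand, and the trivalent vertex, as required.
  \item The self-duality is the identity, which is symmetric.
  \item Generation holds by construction, since $\mathcal{S}(R,a,b)$ is presented by the trivalent vertex, the crossing, cups and caps.
  \item The ribbon structure comes from the ribbon presentation.
  \item Rotational invariance of $\tau$ holds because $\mathcal{S}(R,a,b)$ is a quotient of $\mathsf{RibTri}_R$, in which the vertex is unoriented.
\end{itemize}

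\emph{Main obstacle.} The one point needing care is that $D_\pi$ really has $D'_\pi$ as its image. I will confirm this by noting that both are built from the same left-associated trees, and the functor of Lemma \ref{lem:bigoncheck} sends the vertex to the restricted vertex, i.e.\ the vertex with $P$ inserted on every edge.
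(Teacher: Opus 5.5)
Your proposal is correct and takes essentially the same approach as the paper, whose proof just says the result follows immediately from Lemma~\ref{lem:reducedpartitionbasis}. You read off finite freeness and the rank-$1,0,1,1$ bases from the singleton-free partition basis, transported along Proposition~\ref{prop:StVersions}, and the only remark to add is that your ``main obstacle'' is not actually an obstacle, since the paper \emph{defines} $D'_\pi$ as the image of $D_\pi$ in $\mathrm{Cob}'(R,a,b)$.
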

\begin{proof}
    This follows immediately from Lemma \ref{lem:reducedpartitionbasis}.
\end{proof}

%


\section{Ribbon deformations of \texorpdfstring{$\mathcal{S}_t$}{S t}} \label{sec:StDeformations}

We prove a Kuperberg-like characterization of the trivalent version of Deligne's $S_t$, which we call $\mathcal{S}_t(R)$. The key new ingredient is to add a rotational eigenspace assumption.

\begin{definition}
For $t \in R$, let $\mathcal{S}_t(R)$ be the $R$-linear ribbon category given by the following relations.
\begin{align*}
    \unknot\; &= t-1 &\qquad
      \twist\; &=  \;\drawcup&\qquad
        \twistvertex\; &=  \;\threevertex\\[5pt] 
    \loopvertex\;&=0&
      \twogon\;&= (t-2) \;\onestrandid \; & \braidcross &= \invbraidcross 
\end{align*}
\vspace{5pt}
\[\drawH \; - \; \drawI \; +  \;\twostrandid \; -  \; \cupcap = 0.\]
\end{definition}

This is precisely the category $\mathcal{S}(R,1,t)$ from the last section. We will need from the last section the following facts:

\begin{itemize}
    \item $\mathcal{S}_t(R)$ is a finite free trivalent ribbon category.
    \item An $R$-basis for $\mathcal{S}_t(R)^k$ for $k \leq 4$ is given by the diagrams:
    \begin{equation*}
        \left\{ \right\}; \quad \emptyset; \quad 
        \left\{\; \vertonestrandid \; \right\}; \quad
        \left\{\threevertex\right\}; \;
        \left\{\;   \twostrandid,\; \cupcap,\; \drawI,\; \braidcross \;\right\}
    \end{equation*}
    \item For $t \neq 0$ the additive and idempotent completion of $\mathcal{S}_t(\mathbb{C})$ is equivalent to Deligne's $\mathrm{Rep}(S_t)$.
\end{itemize}

\begin{theorem} \label{thm:StMain}
    Let $(\mathcal{C},X, s, \tau)$ be a trivalent ribbon category over a commutative ring $R$ with $2 \in R^\times$. Suppose that the $k$-box spaces $\mathcal{C}^k$ for $k \leq 4$ are free $R$-modules of ranks $1,0,1,1,4$ with bases given by the diagrams:

    \begin{equation*}
        \left\{ \right\}; \quad \emptyset; \quad 
        \left\{\; \vertonestrandid \; \right\}; \quad
        \left\{\threevertex\right\}; \;
        \left\{\;   \twostrandid,\; \cupcap,\; \drawI,\; \braidcross \;\right\}
    \end{equation*}

    Further suppose that the $-1$-eigenspace for rotation on $\mathcal{C}^4$ is free of rank $1$.

    Define $t$ so that the circle is $t-1$ times the empty diagram. Let $y_i$ be the coefficients in
    \[\drawH =  y_1 \twostrandid + y_2 \cupcap + y_3 \drawI + y_4 \braidcross.\]

     If $y_1$ is invertible, then there's a trivalent ribbon functor $\mathcal{S}_t(R) \rightarrow (\mathcal{C},X,-y_1^{-1}s,-y_1^{-1}\tau)$.
\end{theorem}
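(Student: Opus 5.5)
The plan is to check that each defining relation of $\mathcal{S}_t(R)$ holds in $\mathcal{C}$ after rescaling by $F_{u,v}$ with $u=v=-y_1^{-1}$ (Lemma \ref{lem:rescaling}). The universal property of the presented category then gives the trivalent ribbon functor. Two relations are free:
\begin{itemize}
\item The lollipop vanishes because $\mathcal{C}^1=0$.
\item The circle is $t-1$ by definition of $t$, and $F_{u,v}$ rescales the cap by $u$ and the cup by $u^{-1}$, so the circle is unchanged.
\end{itemize}
The real content is the $I=H$ relation, the bigon, the twist relations and the symmetry of the crossing. For all of these I would use the one-click rotation $r$ on $\mathcal{C}^4$. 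Since $s$ is a symmetric self-duality, $r^4=1$, and $r$ acts on diagrams by rotating them.

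First I determine the $y_i$. The rotation $r$ swaps $\twostrandid$ with $\cupcap$ and $\drawI$ with $\drawH$, and $r^2$ fixes each of these four diagrams. Hence $\twostrandid-\cupcap$ and $\drawI-\drawH$ both lie in the $-1$-eigenspace. By hypothesis this eigenspace is free of rank $1$. The element $\twostrandid-\cupcap$ is part of a basis of $\mathcal{C}^4$, so it is unimodular and therefore generates the eigenspace. Thus $\drawH-\drawI=-c(\twostrandid-\cupcap)$ for some $c\in R$. Comparing with the basis expansion of $\drawH$ gives
\[y_1=-c,\quad y_2=c,\quad y_3=1,\quad y_4=0.\]
In particular $c=-y_1$ is invertible.

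Next I take the partial trace of this relation, closing the right-hand strands:
\begin{itemize}
\item $\drawH$ becomes a strand with a lollipop, which is $0$;
\item $\drawI$ becomes a bigon on a strand, which is $\beta$ times the identity;
\item $\twostrandid$ becomes $(t-1)$ times the identity;
\item $\cupcap$ becomes the identity.
\end{itemize}
This gives $\beta=c(t-2)$. Under $F_{u,v}$, the diagrams $\drawH$ and $\drawI$ and the bigon are all multiplied by $u^{-1}v^2=v$, while $\twostrandid$ and $\cupcap$ are unchanged. With $v=c^{-1}$ the relations become exactly the $I=H$ relation and the bigon relation of $\mathcal{S}_t(R)$.

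The step I expect to be the main obstacle is the braiding: showing that the twist is trivial, that the twisted vertex equals the vertex, and that $\braidcross=\invbraidcross$. Both the twist $\theta$ on $X$ and the action of a crossing on the vertex (which gives $\lambda$ times the vertex) are scalars, because $\mathcal{C}^2$ and $\mathcal{C}^3$ are free of rank $1$. Naturality of the twist against $\tau$ gives $\lambda^2\theta=1$. Rotation sends the crossing to the inverse crossing, so $\braidcross-\invbraidcross$ also lies in the $-1$-eigenspace. Hence $\braidcross-\invbraidcross=d(\twostrandid-\cupcap)$ for some $d\in R$.

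From this I would extract equations in two ways:
\begin{itemize}
\item Composing with the vertex (where $\cupcap$ contributes a lollipop, hence $0$) gives $\lambda-\lambda^{-1}=d$.
\item Taking the partial trace gives an equation of the form $\theta^{\pm1}-\theta^{\mp1}=d(t-2)$.
\end{itemize}
To pin down $\lambda=\theta=1$ and $d=0$, I would first try applying a crossing to the $I=H$ relation and using naturality to slide a strand past the vertices. Expanding the result in the basis should produce enough further polynomial relations, with $2\in R^\times$ used to cancel factors of $2$ such as in $2d=0$. If this does not close up, my fallback would be to compare the two expansions of the composition of two $H$'s with a crossing on $\mathcal{C}^4$. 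Once these scalars are fixed, every relation of $\mathcal{S}_t(R)$ holds in the rescaled category, and the universal property gives the desired functor.
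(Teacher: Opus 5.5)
Up to the braiding, your argument is correct and follows the paper. This covers the unimodularity argument showing that $\smallfig{\twostrandid}-\smallfig{\cupcap}$ generates the $-1$-eigenspace, the resulting values of the $y_i$, capping off to get the bigon, and the rescaling with $u=v=-y_1^{-1}$.

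The gap is the step you yourself flag as the main obstacle. You never show that the twist, the twisted vertex and the crossing take the values required in $\mathcal{S}_t(R)$, and the three relations you do derive cannot force this. Indeed, $\lambda^2\theta=1$, $d=\lambda-\lambda^{-1}$ and $\theta^{\pm1}-\theta^{\mp1}=d(t-2)$ are all satisfied by $\lambda=-1$, $\theta=1$, $d=0$. Moreover, for every unit $\lambda$ there is a value of $t$ satisfying them. What is missing is a relation that uses the fact that the crossing is an independent basis vector of $\mathcal{C}^4$. Saying that an expansion ``should produce enough relations'' does not supply one, and in fact there is no factor of $2$ to cancel anywhere.

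The computation you list as your first attempt does close up, in one step; it is essentially the paper's argument. Normalize so that $H=I-\mathrm{id}+e$, where $H=\smallfig{\drawH}$, $I=\smallfig{\drawI}$, $e=\smallfig{\cupcap}$. Let $\tau^\dagger\colon X\to X\otimes X$ be the rotated vertex, and let $\sigma=c_{X,X}$ be the braiding, with $\tau\circ\sigma=\lambda\tau$. Whichever of $\smallfig{\braidcross}$, $\smallfig{\invbraidcross}$ the braiding $\sigma$ is, its coefficient on the basis vector $\smallfig{\braidcross}$ is $1$, since their difference is $d(\mathrm{id}-e)$. Now compute $H\circ\sigma$ in two ways.

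\emph{First way.} Expand $H$ first. The crossing is absorbed into the lower vertex of $I$ and becomes a kink on the cap of $e$, so $H\circ\sigma=\lambda I-\sigma+\theta^{\pm1}e$.

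\emph{Second way.} Write $H=(\mathrm{id}\otimes\tau)\circ(\tau^\dagger\otimes\mathrm{id})$ and use naturality of the braiding and the hexagon axiom to pull $\tau^\dagger$ through the crossing. The crossing that lands on $\tau$ contributes $\lambda$, so $H\circ\sigma=\lambda D$ with $D=(\mathrm{id}\otimes\tau)\circ(\sigma\otimes\mathrm{id})\circ(\mathrm{id}\otimes\tau^\dagger)$. Applying the $I$--$H$ relation in a disk around the internal edge of $D$ that avoids the crossing produces three terms: a twisted vertex, a kinked strand, and the crossing itself. Thus $D=\lambda^{\pm1}H+\theta^{\pm1}\,\mathrm{id}-\sigma$, and hence
\[
\lambda I-\sigma+\theta^{\pm1}e=\lambda\bigl(\lambda^{\pm1}H+\theta^{\pm1}\,\mathrm{id}-\sigma\bigr).
\]
The signs of the exponents of $\theta$ play no role. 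Since $H=I-\mathrm{id}+e$ has no $\smallfig{\braidcross}$-component, comparing coefficients of $\smallfig{\braidcross}$ gives $-1=-\lambda$, so $\lambda=1$.

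Your own relations then give $\theta=\lambda^{-2}=1$ and $d=\lambda-\lambda^{-1}=0$. These are exactly the twisted-vertex, twist and crossing relations of $\mathcal{S}_t(R)$. The paper likewise pulls a vertex of $H$ through the crossing and compares the coefficients of $\smallfig{\braidcross}$ and then of $\smallfig{\cupcap}$.
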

\begin{proof}
    We need to check that (perhaps after rescaling) all the defining relations of $\mathcal{S}_t(R)$ hold.

    The relation $\smallfig{\loopvertex} = 0$ is immediate because $\mathcal{C}^1$ is the zero vector space.

    From the basis assumption, we have $t,\alpha,\beta \in R$ such that 
    \[\unknot\; = t-1 \qquad
      \twist\; =  \alpha \;\drawcup \qquad
        \twistvertex\; =  \beta \;\threevertex.\]

    Since $2 \in R^\times$ and the $-1$ rotational eigenspace is free of rank $1$, we have relations:
    \begin{equation} \label{eq:IequalsH}
        \drawI - \drawH = r \left(\; \twostrandid -\cupcap \; \right)
    \end{equation}
    \begin{equation} \label{eq:crossing}
    \braidcross - \invbraidcross = \rho \left(\; \twostrandid -\cupcap \; \right)    
    \end{equation}
    
    Thus $y_1 = -r$. Since $y_1$ is invertible, so is $r$. Replacing $s$ and $\tau$ by $r^{-1}s$ and $r^{-1}\tau$ (as in Definition \ref{def:rescaling}), wlog $r = 1$.

    Pulling the upper-right vertex through the crossing gives us the relation
    \begin{equation}
         \twistone = \beta \hspace{0.1in} \twisttwo.
    \end{equation}
    
   We apply Equation \ref{eq:IequalsH} to both sides to get:
   \begin{align*}
       \beta \drawI - \braidcross + \alpha \cupcap &= \beta^2 \drawH  + \beta \alpha \twostrandid - \beta \braidcross \\ &= \beta^2 \drawI + (\beta \alpha-\beta^2) \twostrandid + \beta^2 \cupcap - \beta \braidcross
   \end{align*}
   Comparing coefficients of $\smallfig{\braidcross}$ we see $\beta = 1$, then comparing coefficients of $\smallfig{\cupcap}$ we see $\alpha = 1$.

   The bigon relation follows from applying a cap to Equation \ref{eq:IequalsH}.

   Now we return our attention to Equation \ref{eq:crossing}. Composing with a trivalent vertex and using $\alpha = \beta = 1$, we see that $0 = \rho$.

   Thus all the relations are checked.
\end{proof}

\begin{remark}
Note that the assumption on the $-1$-eigenspace is essential. First, quantum $G_2$ satisfies the assumptions of the theorem, but has $2$-dimensional $-1$-eigenspace. A second example (again with $2$-dimensional $-1$-eigenspace) comes from \cite{SpencerThesis}: the Deligne tensor product $\mathrm{Fib} \boxtimes \mathrm{Fib}^{\mathrm{rev}}$ of a Fibonacci category and its own reversal (i.e. with the same circle value but the inverse braiding).    
\end{remark}

\begin{corollary} \label{cor:StMain}
    Any infinitesimal deformation of $\mathcal{S}_t(\mathbb{C})$ is equivalent to $\mathcal{S}_{t'}(\mathbb{C}[h]/h^2)$ for $t' \in \mathbb{C}[h]/h^2$ with $t' \equiv t \pmod h$.
\end{corollary}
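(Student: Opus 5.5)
The argument follows the same pattern as the $G_2$ corollary, with Theorem~\ref{thm:StMain} in place of Theorem~\ref{thm:G2recognition}. Set $A=\mathbb{C}[h]/h^2$ and let $(\mathcal{C},X,s',\tau')$ be an infinitesimal deformation of $\mathcal{S}_t(\mathbb{C})$, where $t\neq 0$ as in Corollary~\ref{cor:intro-St}.

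\textbf{Step 1: bases lift.} Since $\mathcal{C}$ is finite free and its reduction mod $h$ is trivalent equivalent to $\mathcal{S}_t(\mathbb{C})$, Lemma~\ref{lem:projectiveNakayama} shows that the lifts of the diagram bases of $\mathcal{S}_t(\mathbb{C})^k$ for $k\le 4$ are $A$-bases of $\mathcal{C}^k$. These lifts are the same diagrams, evaluated in $\mathcal{C}$, so the box spaces have ranks $1,0,1,1,4$ with the required bases. Also $2\in A^\times$.

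\textbf{Step 2: the $-1$-eigenspace.} Rotation $\rho$ acts on $\mathcal{C}^4$ with $\rho^4=1$, so $e=(1-\rho)/2$ is an idempotent; this uses $2\in A^\times$. The $-1$-eigenspace is the image of $e$, namely $e\,\mathcal{C}^4$, which is a direct summand of a free module and hence finite projective. Its reduction mod $h$ is $e\,\mathcal{S}_t(\mathbb{C})^4$, the $-1$-eigenspace of $\mathcal{S}_t(\mathbb{C})$. Here $\rho$ fixes the crossing, swaps the identity with the cup-cap, and swaps $I$ with $H$. Using the relation $H-I=\text{cupcap}-\text{id}$, the span of the basis elements $\{\text{id},\text{cupcap},I,\text{cross}\}$ is spanned by $\text{id}-\text{cupcap}$ and $I-H$, which are proportional. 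So this eigenspace is rank $1$.

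A finite projective module over the local ring $A$ is free. Its rank is detected mod $h$, so the $-1$-eigenspace of $\mathcal{C}^4$ is free of rank $1$.

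\textbf{Step 3: apply the recognition theorem.} Define $t'$ by setting the circle equal to $t'-1$, and take $y_1'$ to be the coefficient of the identity in the expansion of $H$. Reducing mod $h$ gives $t'\equiv t$ and $y_1'\equiv -1$, so $y_1'\in A^\times$. Theorem~\ref{thm:StMain} then gives a trivalent ribbon functor
\[
\mathcal{S}_{t'}(A)\to(\mathcal{C},X,-y_1'^{-1}s',-y_1'^{-1}\tau').
\]
Modulo $h$ this functor is the identification $\mathcal{S}_t(\mathbb{C})\simeq\mathcal{C}/h$. That identification is an equivalence because both sides are trivalent ribbon categories with compatible generators, and the source is presented by relations that hold in the target.

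Both source and target are finite free: the source because $\mathcal{S}_{t'}(A)$ is finite free by Section~\ref{sec:StDef}. So Lemma~\ref{lem:projectiveNakayama} shows the functor is an isomorphism on each box space, hence a trivalent ribbon equivalence. Finally $-y_1'^{-1}\equiv 1\pmod h$, so the rescaled target is equivalent to the original deformation.

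\textbf{Main obstacle.} The one delicate point is the mod $h$ identification in Step 3. A trivalent equivalence to $\mathcal{S}_t(\mathbb{C})$ carries its own choice of $s$ and $\tau$, and the normalization $r=1$ in Theorem~\ref{thm:StMain} must agree with the standard $\mathcal{S}_t$ normalization mod $h$. It does, since $\mathcal{S}_t$ itself has $y_1=-1$.

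The role of $t\neq 0$ should also be checked. It is not used in the argument above, which suggests the hypothesis matters only for identifying $\mathcal{S}_t$ with Deligne's category, not for this corollary itself.
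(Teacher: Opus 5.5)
Your proposal is correct and follows the paper's proof essentially step for step. You lift the bases by Lemma~\ref{lem:projectiveNakayama}, show the $-1$-eigenspace is a projective summand that is free of rank one, apply Theorem~\ref{thm:StMain} using $y_1'\equiv -1$, and conclude by Nakayama together with $-(y_1')^{-1}\equiv 1 \pmod h$.

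There is one slip to fix in Step~2. The relation $\rho^4=1$ does not make $\tfrac12(1-\rho)$ idempotent; you need $\rho^2=1$, which is what the paper uses. It does hold: a quarter turn swaps $\mathrm{id}$ with the cup-cap, swaps $I$ with $H$, and swaps the crossing with its inverse, so the half-turn fixes every basis diagram of $\mathcal{C}^4$. Alternatively, use the idempotent $\tfrac14(1-\rho+\rho^2-\rho^3)$.

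Also, in Step~3 the reduction mod $h$ is an equivalence because of the given trivalent equivalence $\mathcal{C}/h\simeq\mathcal{S}_t(\mathbb{C})$: a trivalent functor out of $\mathcal{S}_t(\mathbb{C})$ is determined up to isomorphism by the generators. It is not because the source's relations hold in the target, which only yields a full functor.
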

\begin{proof}
    Let $A=\mathbb{C}[h]/h^2$, and let $(\mathcal{C},X,s,\tau)$ be an infinitesimal deformation of $\mathcal{S}_t(\mathbb{C})$.

    By Lemma \ref{lem:projectiveNakayama}, the diagrams above form a $\mathbb{C}[h]/h^2$-basis for $\mathcal{C}^k$ for $k\leq 4$. Let $t'-1$ be the circle value and $y_i'$ be the coefficients in Theorem \ref{thm:StMain} for $\mathcal{C}$. Since $y_1' \equiv -1 \pmod{h}$, we have that $y_1' \in (\mathbb{C}[h]/h^2)^\times$. We now check the eigenspace condition. Let $\rho$ denote rotation on $\mathcal{C}^4$. Since $\rho^2=1$ and $2$ is invertible in $A$, the operator $e_{-1}=\frac{1}{2}(1-\rho)$ is an idempotent whose image is the $(-1)$-eigenspace of $\rho$. Thus $\operatorname{im}(e_{-1})$ is a finite projective $A$-module which is free of rank $1$ modulo $h$, so Lemma~2.2 implies that $\operatorname{im}(e_{-1})$ is free of rank one.
   
    Thus Theorem~\ref{thm:StMain} gives a functor
\[
\mathcal{S}_{t'}(\mathbb{C}[h]/h^2)
\longrightarrow
(\mathcal{C},X,-(y_1')^{-1}s,-(y_1')^{-1}\tau).
\]
Modulo $h$, this functor is an equivalence, so by Lemma~\ref{lem:projectiveNakayama} it is an isomorphism on every box space, and hence is a trivalent ribbon equivalence. Since $-(y_1')^{-1}\equiv 1\pmod h$, the target is equivalent to
$(\mathcal{C},X,s,\tau)$ as an infinitesimal deformation.
\end{proof}

\begin{remark}
    The same argument also shows that any order $n$ deformation is given by varying $t$, and taking inverse limits, any formal $1$-parameter family is also given by varying $t$.
\end{remark}

\bibliographystyle{amsalpha}
\bibliography{deformations}

\end{document}

%% file: packages.tex
\newif\ifblinded
\blindedtrue
\usepackage{amssymb}
\usepackage{array}
\usepackage{booktabs}
\usepackage{mdwtab}
\usepackage[T1]{fontenc}
\usepackage[utf8]{inputenc}
\usepackage{libertine}
\usepackage[libertine]{newtxmath}
\usepackage[scaled=0.96]{zi4}
\usepackage{hyphenat}
\usepackage{enumitem}
\usepackage{xcolor}
\definecolor{medium-blue}{rgb}{0,0,0.65}
\usepackage[pdftex]{graphicx}
\usepackage[pdftex,margin=1in]{geometry}
\usepackage{url}
\usepackage[textsize=small]{todonotes}
\usepackage{makecell} 
\usepackage[tableposition=below]{caption}

\usepackage{silence}
\usepackage{tikz}
\usetikzlibrary{shapes}
\usetikzlibrary{calc}
\usetikzlibrary{knots}

\usepackage{hyperref}

%% file: macros.tex
\DeclareMathOperator{\Hom}{Hom}

\newcommand{\arXiv}[1]{\href{http://arxiv.org/abs/#1}{\tt \nolinkurl{arXiv:#1}}}
\newcommand{\doi}[1]{\href{http://dx.doi.org/#1}{{\tt \nolinkurl{DOI:#1}}}}

\theoremstyle{plain}
\newtheorem{theorem}{Theorem}
\newtheorem{proposition}{Proposition}
\numberwithin{proposition}{section}
\newtheorem{lemma}[proposition]{Lemma}
\newtheorem{corollary}[proposition]{Corollary}

\theoremstyle{definition}
\newtheorem{definition}[proposition]{Definition}

\theoremstyle{remark}
\newtheorem{example}[proposition]{Example}
\newtheorem{remark}[proposition]{Remark}

\makeatletter
\newcommand\mi@kern[1]{%
  \settowidth\@tempdima{$\mi@obj^{#1}$}
  \kern-\@tempdima
  #1
  \settowidth\@tempdima{$\mi@obj$}
  \kern\@tempdima
}

\newtoks\mi@toksp
\newtoks\mi@toksb
\DeclareRobustCommand{\manyindices}[5]{
  \def\mi@obj{#5}
  \mi@toksp\expandafter{\mi@kern{#2}}
  \mi@toksb\expandafter{\mi@kern{#1}}
  \@mathmeasure4\textstyle{#5_{#1}^{#2}}
  \@mathmeasure6\textstyle{#5_{#3}^{#4}}
  \dimen0-\wd6 \advance\dimen0\wd4
  \@mathmeasure8\textstyle{\hphantom{{}_{#1}^{#2}}#5^{\the\mi@toksp#4}_{\the\mi@toksb#3}}
  \hbox to \dimen0{}{\kern-\dimen0\box8}
}
\makeatother 

\def\semicolon{;}
\def\applytolist#1{
    \expandafter\def\csname multi#1\endcsname##1{
        \def\multiack{##1}\ifx\multiack\semicolon
            \def\next{\relax}
        \else
            \csname #1\endcsname{##1}
            \def\next{\csname multi#1\endcsname}
        \fi
        \next}
    \csname multi#1\endcsname}

\def\calc#1{\expandafter\def\csname c#1\endcsname{{\mathcal #1}}}
\applytolist{calc}QWERTYUIOPLKJHGFDSAZXCVBNM;
\def\bbc#1{\expandafter\def\csname bb#1\endcsname{{\mathbb #1}}}
\applytolist{bbc}QWERTYUIOPLKJHGFDSAZXCVBNM;
\def\bfc#1{\expandafter\def\csname bf#1\endcsname{{\mathbf #1}}}
\applytolist{bfc}QWERTYUIOPLKJHGFDSAZXCVBNM;

\DeclareMathAlphabet{\mathbbold}{U}{bbold}{m}{n}

\graphicspath{{diagrams/}}

\newcommand{\mathfig}[2]{{\hspace{-3pt}\begin{array}{c}%
  \raisebox{-2.5pt}{\includegraphics[width=#1\textwidth]{#2}}%
\end{array}\hspace{-3pt}}}

\newread\testin

\def\mathcenter#1{%
  \vcenter{\hbox{$#1$}}%
}



%% file: drawings.tex
\usepackage{fp}
\usepackage{tikz}
\usetikzlibrary{matrix}
\usetikzlibrary{arrows,backgrounds,patterns,scopes,external,hobby,
    decorations.pathreplacing,
    decorations.pathmorphing
}
\usepackage{tikzit}

\newlength{\fuzzwidth}
\newlength{\arrowlength}
\newlength{\arrowwidth}
\newlength{\pointrad}
\newlength{\linewid}
\newlength{\circlerad}
\newlength{\smcirclerad}
\newcommand{\fuzzcolor}{black!25}
\newcommand{\arrowcolor}{black!25}
\newcommand{\covercolor}{black!0}
\newcommand{\graycolor}{black!55}
\newcommand{\graylightcolor}{black!40}

\newcommand{\coverwidthfuzz}{6pt}
\newcommand{\coverwidth}{3.5pt}
\newcommand{\coverwidththin}{3.25pt}
\newcommand{\coverwidththick}{3.75pt}

\newlength{\linewidthin}
\newlength{\linewidthick}
\tikzset{cdlabel/.style={execute at begin node=$\scriptstyle,execute at end node=$}}

\tikzset{use Hobby shortcut}
\tikzset{
	coverline/.style={
	preaction={draw,line width=\coverwidth,\covercolor}}, 
	coverlinethin/.style={
	preaction={draw,line width=\coverwidththin,\covercolor}}, 
	coverlinethick/.style={
	preaction={draw,line width=\coverwidththick,\covercolor}}, 
	coverlineleft/.style={
	preaction={draw,line width=\coverwidthfuzz,\covercolor,decorate,decoration={curveto,amplitude=0,raise=.35*\fuzzwidth}}}, 
coverlinelefttail/.style={
	preaction={draw,line width=\coverwidthfuzz,\covercolor,decorate,decoration={curveto,amplitude=0,raise=.35*\fuzzwidth,pre=moveto,pre length=2pt}}}, 
        fuzzlefttail/.style={
        preaction={draw,line width=\fuzzwidth,\fuzzcolor,decorate,decoration={curveto,pre=moveto,pre length=2pt,amplitude=0,raise=.5*\fuzzwidth}}}, 
        linestylethin/.style={line width=\linewidthin},
        linestylethick/.style={line width=\linewidthick},
        linestylegray/.style={line width=\linewid,\graycolor},
        linestylegraylight/.style={line width=\linewid,\graylightcolor}
}
\tikzset{
        fuzzright/.style={
        preaction={draw,line width=\fuzzwidth,\fuzzcolor,decorate,decoration={curveto,amplitude=0,raise=-.5*\fuzzwidth}}},
        fuzzleft/.style={
        preaction={draw,line width=\fuzzwidth,\fuzzcolor,decorate,decoration={curveto,amplitude=0,raise=.5*\fuzzwidth}}},
        fuzzrightpre/.style={ 
        preaction={draw,line width=2pt,\fuzzcolor,decorate,decoration={curveto,amplitude=0,raise=-1pt,pre=moveto,pre length=12pt}}},
        fuzzleftpre/.style={ 
        preaction={draw,line width=2pt,\fuzzcolor,decorate,decoration={curveto,post=moveto,post length=32pt,amplitude=0,raise=1pt}}},        
        outstyle/.style={\arrowcolor, line width=\arrowwidth},
        linestyle/.style={line width=\linewid}
}

\newcommand{\newfig}[3]{
  \newsavebox{#2}
  \savebox{#2}{#3}
  \newcommand{#1}{\usebox{#2}}
}

\newcommand{\smallfig}[1]{\mathcenter{\scalebox{0.5}{#1}}}

\newfig{\twistedsquarehor}{\twistedsquarehorbox}{
\begin{tikzpicture}[baseline=-.5ex,scale=.8]
\draw (45:.8cm) -- (-135:.8cm);
\draw[line width=1mm,white,double=black] (-45:.8cm) -- (135:.8cm);
\draw (45:.5cm) .. controls +(-75:.3cm) and +(75:.3cm) .. (-45:.5cm);
\draw (135:.5cm) .. controls +(-105:.3cm) and +(105:.3cm) .. (-135:.5cm);
\end{tikzpicture}}

\newfig{\twistedsquarever}{\twistedsquareverbox}{
\begin{tikzpicture}[baseline=-.5ex,scale=.8]
\draw (45:.8cm) -- (-135:.8cm);
\draw[line width=1mm,white,double=black] (-45:.8cm) -- (135:.8cm);
\draw (45:.5cm) .. controls +(165:.3cm) and +(15:.3cm) .. (135:.5cm);
\draw (-45:.5cm) .. controls +(-165:.3cm) and +(-15:.3cm) .. (-135:.5cm);
\end{tikzpicture}}

\newcommand{\ngon}[2][0]{
\begin{tikzpicture}[baseline=-0.5ex,scale=0.8]
\foreach \x in {1, ..., #2}
	\draw (360*\x/#2+#1:.8cm)--(360*\x/#2+#1:.5cm);
\foreach \x in {1, ..., #2}
	\draw (360*\x/#2+#1:.5cm) .. controls +(360*\x/#2+120+#1:.3cm) and +(360*\x/#2+360/#2-120+#1:.3cm) .. (360*\x/#2+360/#2+#1:.5cm);
\end{tikzpicture}
}

\newcommand{\nvertex}[2][0]{
\begin{tikzpicture}[baseline=-0.5ex,scale=0.8]
\foreach \x in {1, ..., #2}
	\draw (360*\x/#2+#1:.8cm)--(0,0);
\end{tikzpicture}
}

\newfig{\twogon}{\twogonbox}{\ngon{2}}
\newfig{\threegon}{\threegonbox}{\ngon[-90]{3}}
\newfig{\threevertex}{\threevertexbox}{\nvertex[-90]{3}}
\newfig{\fourgon}{\fourgonbox}{\ngon[45]{4}}

\newfig{\upsidedownthreevertex}{\upsidedownthreevertexbox}{%
\begin{tikzpicture}[baseline=-0.5ex,scale=0.8]
	\draw (330:.8cm)--(0,0);
	\draw (90:.8cm)--(0,0);
	\draw (210:.8cm)--(0,0);
\end{tikzpicture}}

\newfig{\unknot}{\unknotbox}{%
\begin{tikzpicture}[baseline=-0.5ex,scale=0.8]
  \draw (0,0) circle (.6cm);
\end{tikzpicture}}

\newfig{\drawI}{\drawIbox}{\begin{tikzpicture}[baseline=-0.5ex,scale=0.8]
 	\draw (0,.2) -- (45:.8cm);
 	\draw (0,.2) -- (135:.8cm);
	\draw (0,.2) -- (0,-.2);
 	\draw (0,-.2) -- (-45:.8cm);
 	\draw (0,-.2) -- (-135:.8cm);
\end{tikzpicture}}

\newfig{\drawH}{\drawHbox}{\begin{tikzpicture}[baseline=-0.5ex,rotate=90,scale=0.8]
 	\draw (0,.2) -- (45:.8cm);
 	\draw (0,.2) -- (135:.8cm);
	\draw (0,.2) -- (0,-.2);
 	\draw (0,-.2) -- (-45:.8cm);
 	\draw (0,-.2) -- (-135:.8cm);
\end{tikzpicture}}

\newfig{\drawdottedH}{\drawdottedHbox}{\begin{tikzpicture}[baseline=-0.5ex,scale=0.8]
        \draw (45:.8cm) -- (-45:.8cm);
        \draw (135:.8cm) -- (-135:.8cm);
        \draw[dotted] (-0.57cm,0) -- (0.57cm, 0);
\end{tikzpicture}}

\newfig{\onestrandid}{\onestrandidbox}{\begin{tikzpicture}[baseline=-0.5ex,scale=0.8]
	\draw (-.8cm,0)--(.8cm,0);
\end{tikzpicture}}

\newfig{\vertonestrandid}{\vertonestrandidbox}{\begin{tikzpicture}[rotate=90,baseline=-0.5ex,scale=0.8]
	\draw (-.8cm,0)--(.8cm,0);
\end{tikzpicture}}

\newfig{\drawcup}{\drawcupbox}{\begin{tikzpicture}[baseline=-0.5ex,scale=0.8]
	\draw (45:.8cm) to [curve through=(90:0cm)] (135:.8cm);
\end{tikzpicture}}

\newfig{\drawcap}{\drawcapbox}{\begin{tikzpicture}[baseline=-0.5ex,scale=0.8]
	\draw (-45:.8cm) to [curve through=(-90:0cm)] (-135:.8cm);
\end{tikzpicture}}

\newfig{\drawbubblecap}{\drawbubblecapbox}{\begin{tikzpicture}[baseline=-0.5ex,scale=0.8]
	\draw (180:.8cm) to [curve through = (135: .8cm)] (120:.8cm);
	\draw (0:.8cm) to [curve through = (45: .8cm)] (60:.8cm);
	\draw (120:.8cm) to [curve through = (90:1cm)] (60:.8cm);
	\draw (120:.8cm) to [curve through = (90:.4cm)] (60:.8cm);
\end{tikzpicture}}

\newfig{\cupcap}{\cupcapbox}{\begin{tikzpicture}[baseline=-0.5ex,scale=0.8]
	\draw (45:.8cm) to [curve through=(90:.3cm)] (135:.8cm);
	\draw (-45:.8cm) to [curve through=(-90:.3cm)] (-135:.8cm);
\end{tikzpicture}}

\newfig{\twostrandid}{\twostrandidbox}{\begin{tikzpicture}[baseline=-0.5ex,rotate=90,scale=0.8]
	\draw (45:.8cm) to [curve through=(90:.3cm)] (135:.8cm);
	\draw (-45:.8cm) to [curve through=(-90:.3cm)] (-135:.8cm);
\end{tikzpicture}}

\newfig{\symcross}{\symcrossbox}{\begin{tikzpicture}[baseline=-0.5ex,scale=0.8]
	\draw (45:.8cm) -- (-135:.8cm);
	\draw (-45:.8cm) -- (135:.8cm);
\end{tikzpicture}}

\newfig{\braidcross}{\braidcrossbox}{\begin{tikzpicture}[baseline=-0.5ex,scale=0.8]
	\draw (45:.8cm) -- (-135:.8cm);
	\draw[line width=1mm,white,double=black] (-45:.8cm) -- (135:.8cm);
\end{tikzpicture}}

\newfig{\invbraidcross}{\invbraidcrossbox}{\begin{tikzpicture}[baseline=-0.5ex,scale=0.8]
	\draw (-45:.8cm) -- (135:.8cm);
	\draw[line width=1mm,white,double=black] (45:.8cm) -- (-135:.8cm);
\end{tikzpicture}}

\newfig{\drawcrossX}{\drawcrossXbox}{\begin{tikzpicture}[baseline=-0.5ex,scale=0.8,rotate=90]
        \draw ([out angle=-150].2,-.3) ..([in angle=-70]135:.8cm);
 	\draw[line width=2mm,white]
              ([out angle=150].2,.3) .. ([in angle=70]-135:.8cm);
 	\draw ([out angle=150].2,.3) .. ([in angle=70]-135:.8cm);
 	\draw ([out angle=30].2,.3) .. (45:.8cm);
	\draw (.2,.3) -- (.2,-.3);
 	\draw ([out angle=-30].2,-.3) .. (-45:.8cm);
\end{tikzpicture}}

\newfig{\drawinvcrossX}{\drawinvcrossXbox}{\begin{tikzpicture}[baseline=-0.5ex,scale=0.8,rotate=90]
 	\draw ([out angle=150].2,.3) .. ([in angle=70]-135:.8cm);
        \draw[line width=2mm,white] ([out angle=-150].2,-.3) ..([in angle=-70]135:.8cm);
        \draw([out angle=-150].2,-.3) ..([in angle=-70]135:.8cm);
	\draw ([out angle=30].2,.3) .. (45:.8cm);
	\draw (.2,.3) -- (.2,-.3);
 	\draw ([out angle=-30].2,-.3) .. (-45:.8cm);
\end{tikzpicture}}

\newfig{\drawsymcrossX}{\drawsymcrossXbox}{\begin{tikzpicture}[baseline=-0.5ex,scale=0.8,rotate=90]
 	\draw ([out angle=30].2,.3) .. (45:.8cm);
	\draw (.2,.3) -- (.2,-.3);
 	\draw ([out angle=-30].2,-.3) .. (-45:.8cm);
        \draw ([out angle=-150].2,-.3) ..([in angle=-70]135:.8cm);
 	\draw
              ([out angle=150].2,.3) .. ([in angle=70]-135:.8cm);
\end{tikzpicture}}

\newfig{\drawsyminvcrossX}{\drawsyminvcrossXbox}{\begin{tikzpicture}[baseline=-0.5ex,scale=0.8,rotate=-90]
 	\draw ([out angle=30].2,.3) .. (45:.8cm);
	\draw (.2,.3) -- (.2,-.3);
 	\draw ([out angle=-30].2,-.3) .. (-45:.8cm);
        \draw ([out angle=-150].2,-.3) ..([in angle=-70]135:.8cm);
 	\draw
              ([out angle=150].2,.3) .. ([in angle=70]-135:.8cm);
\end{tikzpicture}}

\newfig{\twist}{\twistbox}{%
  \begin{tikzpicture}[baseline=-0.5ex,scale=0.8]
    \draw[line width=1mm,white,double=black]
       ([out angle=-15]135:.8cm)..([blank=soft]-60:.4cm)..(-120:.4cm)..([in angle=-165]45:.8cm);
    \draw[line width=1mm,white,double=black,use previous Hobby path={invert soft blanks,disjoint}];
  \end{tikzpicture}}

\newfig{\symtwist}{\symtwistbox}{%
  \begin{tikzpicture}[baseline=-0.5ex,scale=0.8]
    \draw
       ([out angle=-15]135:.8cm)..([blank=soft]-60:.4cm)..(-120:.4cm)..([in angle=-165]45:.8cm);
   \draw[use previous Hobby path={invert soft blanks,disjoint}];
  \end{tikzpicture}}

\newfig{\twistvertex}{\twistvertexbox}{%
\begin{tikzpicture}[baseline=-0.5ex,scale=0.8]
  \draw (0,-0.4)--(0,-0.8);
  \draw ([out angle=-170]30:0.8cm)..([in angle=160](0,-0.4);
  \draw[line width=1mm,white,double=black]
        ([out angle=-10]150:0.8cm)..([in angle=20](0,-0.4);
\end{tikzpicture}}

\newfig{\symtwistvertex}{\symtwistvertexbox}{%
\begin{tikzpicture}[baseline=-0.5ex,scale=0.8]
  \draw (0,-0.4)--(0,-0.8);
  \draw ([out angle=-170]30:0.8cm)..([in angle=160](0,-0.4);
  \draw
        ([out angle=-10]150:0.8cm)..([in angle=20](0,-0.4);
\end{tikzpicture}}

\newfig{\loopvertex}{\loopvertexbox}{%
\begin{tikzpicture}[baseline=-0.5ex,scale=0.8]
  \draw (0,-0.5)--(0,-0.8);
  \draw ([out angle=150]0,-0.5)..(-0.02,0.6)..(0.02,0.6)..([in angle=30]0,-0.5);
\end{tikzpicture}}

\newfig{\idtangle}{\idtanglebox}{%
\begin{tikzpicture}[baseline=-0.5ex,scale=0.8]
\draw (0,0) node {$D$};
\draw[densely dashed] (0,0) circle (.5cm);
\draw (45:.5cm) -- (45:0.8cm);
\draw (135:.5cm) -- (135:0.8cm);
\draw (-45:.5cm) -- (-45:0.8cm);
\draw (-135:.5cm) -- (-135:0.8cm);
\end{tikzpicture}}
    
\newfig{\Rcycle}{\Rcyclebox}{%
\begin{tikzpicture}[baseline=-0.5ex,scale=0.8]
\draw (0,0) node {$D$};
\draw[densely dashed] (0,0) circle (.4cm);
\draw (45:.4cm) -- (45:1.1cm);
\draw ([out angle=-70]-45:.4cm) .. (-90:.7cm) .. ([in angle=0]-135:1.1cm);
\draw ([out angle=-160]-135:.4cm) .. (180:.7cm) .. ([in angle=-90]135:1.1cm);
\draw[line width=0.8mm,white,double=black]
      ([out angle=90]135:.4cm) .. (45:.7cm) .. ([in angle=90]-45:1.1cm);
\end{tikzpicture}}

\newfig{\ladder}{\ladderbox}{%
\begin{tikzpicture}[baseline=-0.5ex,scale=0.8]
\draw (0,0) node {$D$};
\draw[densely dashed] (0,0) circle (.5cm);
\draw (135:.5cm) -- (135:0.9cm);
\draw (-135:.5cm) -- (-135:0.9cm);
\draw ([out angle=45]45:.5cm) .. ([in angle=150]0.8,0.45);
\draw ([out angle=-45]-45:.5cm) .. ([in angle=-150]0.8,-0.45);
\draw (0.8,0.45)--(0.8,-0.45);
\draw ([out angle=30]0.8,0.45) .. ([in angle=-135]1.2,0.636);
\draw ([out angle=-30]0.8,-0.45) .. ([in angle=135]1.2,-0.636);
\end{tikzpicture}}

\newfig{\sqsq}{\sqsqbox}{%
\begin{tikzpicture}[scale=.15]
\draw (0,0) -- (2,0) -- (2,1) -- (0,1) -- cycle (1,0) -- (1,1);
\end{tikzpicture}}

\newfig{\overviolin}{\overviolinbox}{%
\begin{tikzpicture}[baseline=0]
	\coordinate (E1) at (45:.8cm);
	\coordinate (E2) at (225:.8cm);
	\coordinate (E3) at (270:.8cm);
	\coordinate (P1) at (0,0);
	\coordinate (P2) at (120:.6cm);
	\coordinate (P3) at (.4cm,.2cm);
	\coordinate (C1) at (100:.9cm);
	\coordinate (C2) at (135:.5cm);
	\coordinate (C3) at (.8,.1);
	\draw (E1) -- (P1) -- (E3);
	\draw (P1) .. controls  (C1) .. (P2);
	\draw[white, line width=4pt] (P2) .. controls  (C2) .. (P3);	
	\draw[white, line width=4pt] (P3) .. controls (C3) .. (E2);
	\draw (P2) .. controls  (C2) .. (P3);	
	\draw (P3) .. controls (C3) .. (E2);
\end{tikzpicture}}

\newfig{\rotatedtrivalent}{\rotatedtrivalentbox}{%
\begin{tikzpicture}[baseline=0]
	\coordinate (E1) at (45:.8cm);
	\coordinate (E2) at (225:.8cm);
	\coordinate (E3) at (270:.8cm);
	\coordinate (P0) at (0,0);
	\coordinate (C1) at (-.1,.7);
	\coordinate (C2) at (-.3,-.2);
	\coordinate (C3) at (.2,-.2);
	\draw (P0) .. controls (C1) .. (E2);
	\draw (P0) .. controls (C2) .. (E3);
	\draw (P0) .. controls (C3) .. (E1);
\end{tikzpicture}}

\newcommand{\dogbone}{%
\begin{tikzpicture}[baseline=0cm]
	\node (a) at (-.5,0) {$\bullet$};
	\node (b) at (.5,0) {$\bullet$};
	\draw (a.center) to (b.center);
\end{tikzpicture}}

\newcommand{\counitunit}{%
\begin{tikzpicture}[baseline=-0.5ex]
    \node (a) at (0,.15) {$\bullet$};
    \node (b) at (0,-.15) {$\bullet$};
    \draw (a.center) -- ++(0,.45);
    \draw (b.center) -- ++(0,-.45);
\end{tikzpicture}%
}

\newcommand{\onevalent}{%
\begin{tikzpicture}[baseline=0cm]
	\node (a) at (0,-.5) {};
	\node (b) at (0,.5) {$\bullet$};
	\draw (a.center) to (b.center);
\end{tikzpicture}}

\newcommand{\leftunitvertex}{%
\begin{tikzpicture}[baseline=0cm]
	\node (a) at (-.5,.5) {$\bullet$};
	\node (b) at (.5,.5) {};
	\node (c) at (0,-.5) {};
	\node (d) at (0,0) {};
	\draw (a.center) to (d.center);
	\draw (b.center) to (d.center);
	\draw (c.center) to (d.center);
\end{tikzpicture}}

\newcommand{\rightunitvertex}{%
\begin{tikzpicture}[baseline=0cm]
	\node (a) at (-.5,.5) {};
	\node (b) at (.5,.5) {$\bullet$};
	\node (c) at (0,-.5) {};
	\node (d) at (0,0) {};
	\draw (a.center) to (d.center);
	\draw (b.center) to (d.center);
	\draw (c.center) to (d.center);
\end{tikzpicture}}

\tikzset{
  Pcoupon/.style={
    rectangle,
    draw,
    fill=white,
    inner sep=0pt,
    outer sep=0pt,
    minimum width=4.5mm,
    minimum height=3mm,
    font=\scriptsize
  },
  unitvertex/.style={
    circle,
    fill,
    inner sep=0pt,
    minimum size=1.4mm
  }
}

\newcommand{\PBigon}[2]{%
\begin{tikzpicture}[
    baseline={(0,.60)},
    line width=.45pt,
    x=.9cm,
    y=.9cm
  ]
  \draw (0,-.55) -- (0,-.47);
  \draw (0,-.13) -- (0,0);
  \node[Pcoupon] at (0,-.30) {$P$};

  \draw (0,1.20) -- (0,1.33);
  \draw (0,1.67) -- (0,1.75);
  \node[Pcoupon] at (0,1.50) {$P$};

  \ifcase#1
    \draw (0,0)
      .. controls (-.36,.23) and (-.36,.97) ..
      (0,1.20);
  \or
    \draw (0,0)
      .. controls (-.29,.17) and (-.35,.34) ..
      (-.34,.43);
    \draw (-.34,.77)
      .. controls (-.35,.86) and (-.29,1.03) ..
      (0,1.20);
    \node[Pcoupon] at (-.34,.60) {$P$};
  \or
    \draw (0,0)
      .. controls (-.29,.17) and (-.34,.36) ..
      (-.33,.48);
    \node[unitvertex] at (-.33,.48) {};
    \node[unitvertex] at (-.33,.72) {};
    \draw (-.33,.72)
      .. controls (-.34,.84) and (-.29,1.03) ..
      (0,1.20);
  \fi

  \ifcase#2
    \draw (0,0)
      .. controls (.36,.23) and (.36,.97) ..
      (0,1.20);
  \or
    \draw (0,0)
      .. controls (.29,.17) and (.35,.34) ..
      (.34,.43);
    \draw (.34,.77)
      .. controls (.35,.86) and (.29,1.03) ..
      (0,1.20);
    \node[Pcoupon] at (.34,.60) {$P$};
  \or
    \draw (0,0)
      .. controls (.29,.17) and (.34,.36) ..
      (.33,.48);
    \node[unitvertex] at (.33,.48) {};
    \node[unitvertex] at (.33,.72) {};
    \draw (.33,.72)
      .. controls (.34,.84) and (.29,1.03) ..
      (0,1.20);
  \fi
\end{tikzpicture}%
}

\newcommand{\PIdentity}{%
\begin{tikzpicture}[
    baseline={(0,.60)},
    line width=.45pt,
    x=.9cm,
    y=.9cm
  ]
  \draw (0,-.10) -- (0,.43);
  \draw (0,.77) -- (0,1.30);
  \node[Pcoupon] at (0,.60) {$P$};
\end{tikzpicture}%
}

\newcommand{\twistone}{%
\begin{tikzpicture}[
    baseline=(current bounding box.center),
    scale=1.5,
    every path/.style={
        draw,
        line width=.45pt,
        line cap=round,
        line join=round
    }
]
    \draw
        (0,0)
        -- (.34,.34)
        .. controls (.43,.43) and (.46,.56) ..
        (.375,.625);

    \draw[
        preaction={draw=white,line width=2.2pt}
    ]
        (.5,0)
        -- (.16,.34)
        .. controls (.07,.43) and (.04,.56) ..
        (.125,.625);

    \draw
        (.125,.625)
        .. controls (.07,.66) and (.025,.71) ..
        (0,.75);

    \draw
        (.375,.625)
        .. controls (.43,.66) and (.475,.71) ..
        (.5,.75);

    \draw
        (.125,.625)
        -- (.375,.625);
\end{tikzpicture}%
}

\newcommand{\twisttwo}{%
\begin{tikzpicture}[
    baseline=(current bounding box.center),
    scale=1.5,
    every path/.style={
        draw,
        line width=.45pt,
        line cap=round,
        line join=round
    }
]
    \draw
        (.25,.25)
        -- (-.14,-.14)
        .. controls (-.23,-.23) and (-.32,-.21) ..
        (-.375,-.125);

    \draw[
        preaction={draw=white,line width=2.2pt}
    ]
        (.25,-.25)
        -- (-.14,.14)
        .. controls (-.23,.23) and (-.32,.21) ..
        (-.375,.125);

    \draw
        (-.375,.125)
        .. controls (-.42,.18) and (-.47,.23) ..
        (-.5,.25);

    \draw
        (-.375,-.125)
        .. controls (-.42,-.18) and (-.47,-.23) ..
        (-.5,-.25);

    \draw
        (-.375,-.125)
        -- (-.375,.125);
\end{tikzpicture}%
}

%% file: deformations.bib
@misc{stacks-project,
  author       = {The {Stacks project authors}},
  title        = {The Stacks project},
  howpublished = {\url{https://stacks.math.columbia.edu}},
  year         = {2026},
}

@unpublished{EtingofSnowdenDeformations,
  author = {Etingof, Pavel and Snowden, Andrew},
  title  = {Deformations of symmetric tensor categories},
  note   = {Work in progress},
}

@misc{2510.09922,
      title={Reconstruction of tensor categories of type ${G}_2$}, 
      author={Lilit Martirosyan and Hans Wenzl},
      year={2026},
      eprint={2510.09922},
      archivePrefix={arXiv},
      primaryClass={math.QA},
      url={https://arxiv.org/abs/2510.09922}, 
}

@incollection {MR1664995,
    AUTHOR = {Yetter, David N.},
     TITLE = {Braided deformations of monoidal categories and {V}assiliev
              invariants},
 BOOKTITLE = {Higher category theory ({E}vanston, {IL}, 1997)},
    SERIES = {Contemp. Math.},
    VOLUME = {230},
     PAGES = {117--134},
 PUBLISHER = {Amer. Math. Soc., Providence, RI},
      YEAR = {1998},
      ISBN = {0-8218-1056-1},
   MRCLASS = {18D10 (57M25)},
  MRNUMBER = {1664995},
       DOI = {10.1090/conm/230/03341},
       URL = {https://doi-org.proxyiub.uits.iu.edu/10.1090/conm/230/03341},
}

@article {MR4756485,
    AUTHOR = {Orellana, Rosa and Wallace, Nancy and Zabrocki, Mike},
     TITLE = {Representations of the quasi-partition algebras},
   JOURNAL = {J. Algebra},
  FJOURNAL = {Journal of Algebra},
    VOLUME = {655},
      YEAR = {2024},
     PAGES = {758--793},
      ISSN = {0021-8693,1090-266X},
   MRCLASS = {20C30 (05E16 20C05)},
  MRNUMBER = {4756485},
MRREVIEWER = {Annamalai\ Tamilselvi},
       DOI = {10.1016/j.jalgebra.2024.01.025},
       URL = {https://doi-org.proxyiub.uits.iu.edu/10.1016/j.jalgebra.2024.01.025},
}

@book{EGHLSVY11,
  author    = {Pavel Etingof and Oleg Golberg and Sebastian Hensel
               and Tiankai Liu and Alex Schwendner and Dmitry Vaintrob
               and Elena Yudovina},
  title     = {Introduction to Representation Theory},
  series    = {Student Mathematical Library},
  volume    = {59},
  publisher = {American Mathematical Society},
  address   = {Providence, RI},
  year      = {2011},
  isbn      = {978-0-8218-5351-1}
}

@article {MR1641842,
    AUTHOR = {Crane, L. and Yetter, D. N.},
     TITLE = {Deformations of (bi)tensor categories},
   JOURNAL = {Cahiers Topologie G\'eom. Diff\'erentielle Cat\'eg.},
  FJOURNAL = {Cahiers de Topologie et G\'eom\'etrie Diff\'erentielle
              Cat\'egoriques},
    VOLUME = {39},
      YEAR = {1998},
    NUMBER = {3},
     PAGES = {163--180},
      ISSN = {0008-0004},
   MRCLASS = {18D10 (17B37)},
  MRNUMBER = {1641842},
MRREVIEWER = {Sorin\ D\u asc\u alescu},
}

@article {MR2308953,
    AUTHOR = {Sikora, Adam S. and Westbury, Bruce W.},
     TITLE = {Confluence theory for graphs},
   JOURNAL = {Algebr. Geom. Topol.},
  FJOURNAL = {Algebraic \& Geometric Topology},
    VOLUME = {7},
      YEAR = {2007},
     PAGES = {439--478},
      ISSN = {1472-2747,1472-2739},
   MRCLASS = {57M15 (05C10 05C30 17B20)},
  MRNUMBER = {2308953},
MRREVIEWER = {S.\ V.\ Duzhin},
       DOI = {10.2140/agt.2007.7.439},
       URL = {https://doi-org.proxyiub.uits.iu.edu/10.2140/agt.2007.7.439},
}

@unpublished{SpencerThesis,
  author = {Benjamin Spencer},
  title  = {Rep({S}t)-like categories},
  note   = {Ph.D. thesis, Indiana University, in preparation},
  year   = {2026},
}

@article {MR2143201,
    AUTHOR = {Halverson, Tom and Ram, Arun},
     TITLE = {Partition algebras},
   JOURNAL = {European J. Combin.},
  FJOURNAL = {European Journal of Combinatorics},
    VOLUME = {26},
      YEAR = {2005},
    NUMBER = {6},
     PAGES = {869--921},
      ISSN = {0195-6698,1095-9971},
   MRCLASS = {05E10 (20C30 20G05)},
  MRNUMBER = {2143201},
MRREVIEWER = {Karin\ Erdmann},
       DOI = {10.1016/j.ejc.2004.06.005},
       URL = {https://doi-org.proxyiub.uits.iu.edu/10.1016/j.ejc.2004.06.005},
}

@misc{2204.04526,
      title={Oligomorphic groups and tensor categories}, 
      author={Nate Harman and Andrew Snowden},
      year={2024},
      eprint={2204.04526},
      archivePrefix={arXiv},
      primaryClass={math.RT},
      url={https://arxiv.org/abs/2204.04526}, 
}

@article {MR2559686,
    AUTHOR = {Morrison, Kim and Peters, Emily and Snyder, Noah},
     TITLE = {Skein theory for the {$D_{2n}$} planar algebras},
   JOURNAL = {J. Pure Appl. Algebra},
  FJOURNAL = {Journal of Pure and Applied Algebra},
    VOLUME = {214},
      YEAR = {2010},
    NUMBER = {2},
     PAGES = {117--139},
      ISSN = {0022-4049,1873-1376},
   MRCLASS = {46L37 (18D10 57M15)},
  MRNUMBER = {2559686},
MRREVIEWER = {Yasuyuki\ Kawahigashi},
       DOI = {10.1016/j.jpaa.2009.04.010},
       URL = {https://doi-org.proxyiub.uits.iu.edu/10.1016/j.jpaa.2009.04.010},
}

@article {MR5101335,
    AUTHOR = {Morrison, Kim and Snyder, Noah and Thurston, Dylan},
     TITLE = {Towards the {Q}uantum {E}xceptional {S}eries},
   JOURNAL = {Mem. Amer. Math. Soc.},
  FJOURNAL = {Memoirs of the American Mathematical Society},
    VOLUME = {321},
      YEAR = {2026},
    NUMBER = {1637},
     PAGES = {vii+109},
      ISSN = {0065-9266,1947-6221},
      ISBN = {978-1-4704-8148-3; 978-1-4704-8709-6},
   MRCLASS = {57K14 (17B25 18M15 18M30 20)},
  MRNUMBER = {5101335},
       DOI = {10.1090/memo/1637},
       URL = {https://doi-org.proxyiub.uits.iu.edu/10.1090/memo/1637},
}

@unpublished{OstrikSnyderG2,
  author = {Ostrik, Victor and Snyder, Noah},
  title  = {Tilting Modules for Quantum {$G_2$} at Roots of Unity and {K}uperberg's Spider},
  note   = {Work in progress}
}

@misc{1507.06030,
      title={Yang-Baxter relation planar algebras}, 
      author={Zhengwei Liu},
      year={2016},
      eprint={1507.06030},
      archivePrefix={arXiv},
      primaryClass={math.OA},
      url={https://arxiv.org/abs/1507.06030}, 
}

@incollection {MR1237835,
    AUTHOR = {Kazhdan, David and Wenzl, Hans},
     TITLE = {Reconstructing monoidal categories},
 BOOKTITLE = {I. {M}. {G}elfand {S}eminar},
    SERIES = {Adv. Soviet Math.},
    VOLUME = {16, Part 2},
     PAGES = {111--136},
 PUBLISHER = {Amer. Math. Soc., Providence, RI},
      YEAR = {1993},
      ISBN = {0-8218-4119-X},
   MRCLASS = {18D10 (17B37)},
  MRNUMBER = {1237835},
MRREVIEWER = {Graham\ J.\ Ellis},
}

@ARTICLE{MR2737787,
  AUTHOR = {Comes, Jonathan and Ostrik, Victor},
  YEAR = {2011},
  DOI = {10.1016/j.aim.2010.08.010},
  ISSN = {0001-8708},
  JOURNAL = {Adv. Math.},
  NUMBER = {2},
  PAGES = {1331--1377},
  TITLE = {On blocks of {D}eligne's category\ {$\underline{\rm Re}{\rm p}(S_t)$}},
  VOLUME = {226},
}

@ARTICLE{MR3177889,
  AUTHOR = {Daugherty, Zajj and Orellana, Rosa},
  YEAR = {2014},
  DOI = {10.1016/j.jalgebra.2013.11.028},
  ISSN = {0021-8693},
  JOURNAL = {J. Algebra},
  ARXIV = {1212.2596},
  PAGES = {124--151},
  TITLE = {The quasi-partition algebra},
  VOLUME = {404},
}

@INCOLLECTION{MR2348906,
  AUTHOR = {Deligne, Pierre},
  address = {Mumbai},
  BOOKTITLE = {Algebraic groups and homogeneous spaces},
  YEAR = {2007},
  NOTE = {available at \url{http://www.math.ias.edu/files/deligne/Symetrique.pdf}},
  PAGES = {209--273},
  SERIES = {Tata Inst. Fund. Res. Stud. Math.},
  volume = 	 {19},
  TITLE = {La catégorie des représentations du groupe symétrique {$S_t$}, lorsque {$t$} n'est pas un entier naturel},
}

@MISC{1601.03426,
  AUTHOR = {Harman, Nate},
  YEAR = {2016},
  ARXIV = {1601.03426},
  TITLE = {Deligne categories as limits in rank and characteristic},
}

@MISC{2007.11640,
  AUTHOR = {Khovanov, Mikhail and Sazdanovic, Radmila},
  YEAR = {2020},
  ARXIV = {2007.11640},
  TITLE = {Bilinear pairings on two-dimensional cobordisms and generalizations of the {D}eligne category},
}

@ARTICLE{MR1265145,
  AUTHOR = {Kuperberg, Greg},
  YEAR = {1994},
  ISSN = {0129-167X},
  JOURNAL = {Internat. J. Math.},
  ARXIV = {math.QA/9201302},
  DOI = {10.1142/S0129167X94000048},
  NUMBER = {1},
  PAGES = {61--85},
  TITLE = {The quantum {$G\sb 2$} link invariant},
  VOLUME = {5},
}

@ARTICLE{MR1403861,
  AUTHOR = {Kuperberg, Greg},
  YEAR = {1996},
  ISSN = {0010-3616},
  JOURNAL = {Comm. Math. Phys.},
  ARXIV = {q-alg/9712003},
  NUMBER = {1},
  PAGES = {109--151},
  TITLE = {Spiders for rank {$2$} {L}ie algebras},
  VOLUME = {180},
       URL = {http://projecteuclid.org/euclid.cmp/1104287237}
}

@ARTICLE{MR2783128,
  AUTHOR = {Morrison, Kim and Peters, Emily and Snyder, Noah},
  YEAR = {2011},
  DOI = {10.4171/QT/16},
  ISSN = {1663-487X},
  JOURNAL = {Quantum Topol.},
  ARXIV = {1003.0022},
  NUMBER = {2},
  PAGES = {101--156},
  TITLE = {Knot polynomial identities and quantum group coincidences},
  VOLUME = {2},
}

@ARTICLE{MR3624901,
  AUTHOR = {Morrison, Kim and Peters, Emily and Snyder, Noah},
  YEAR = {2017},
  DOI = {10.1007/s00029-016-0240-3},
  ISSN = {1022-1824},
  JOURNAL = {Selecta Math. (N.S.)},
  ARXIV = {1501.06869},
  NUMBER = {2},
  PAGES = {817--868},
  TITLE = {Categories generated by a trivalent vertex},
  VOLUME = {23},
}

@INCOLLECTION{MR2767048,
  AUTHOR = {Selinger, P.},
  PUBLISHER = {Springer, Heidelberg},
  BOOKTITLE = {New structures for physics},
  YEAR = {2011},
  DOI = {10.1007/978-3-642-12821-9\_4},
  ARXIV = {0908.3347},
  PAGES = {289--355},
  SERIES = {Lecture Notes in Phys.},
  TITLE = {A survey of graphical languages for monoidal categories},
  VOLUME = {813},
}

@ARTICLE{MR2132671,
  AUTHOR = {Tuba, Imre and Wenzl, Hans},
  YEAR = {2005},
  ISSN = {0075-4102},
  JOURNAL = {J. Reine Angew. Math.},
  ARXIV = {math.QA/0301142},
  DOI = {10.1515/crll.2005.2005.581.31},
  PAGES = {31--69},
  TITLE = {On braided tensor categories of type {$BCD$}},
  VOLUME = {581},
}

@article {MR4450143,
    AUTHOR = {Khovanov, Mikhail and Ostrik, Victor and Kononov, Yakov},
     TITLE = {Two-dimensional topological theories, rational functions and
              their tensor envelopes},
   JOURNAL = {Selecta Math. (N.S.)},
  FJOURNAL = {Selecta Mathematica. New Series},
    VOLUME = {28},
      YEAR = {2022},
    NUMBER = {4},
     PAGES = {Paper No. 71, 68},
      ISSN = {1022-1824,1420-9020},
   MRCLASS = {18M10 (05A18 57R56 81R05)},
  MRNUMBER = {4450143},
       DOI = {10.1007/s00029-022-00785-z},
       URL = {https://doi.org/10.1007/s00029-022-00785-z},
  ARXIV = 	 {2011.14758},
}
